\documentclass[11pt,reqno]{amsart}

\usepackage{cite} 
\usepackage{color}
\usepackage{graphicx}         
\usepackage{amsmath,amssymb}
\usepackage{subfigure}
\usepackage{amssymb}
\usepackage{amsfonts}
\usepackage{eucal}
\usepackage[titletoc,title]{appendix}
\usepackage{mathptmx} 
\usepackage{epsfig}
\usepackage{epstopdf}
\usepackage{subcaption} 
\usepackage{hyperref}
\usepackage{microtype}
\usepackage{fullpage}

\usepackage{tikz} \usetikzlibrary{calc} \tikzset{>=latex}

\DisableLigatures{encoding = *, family = * }

\usepackage{bbm}
\usepackage{mathrsfs}

\newtheorem{theorem}{Theorem}
\newtheorem{definition}{Definition}

\newtheorem{lemma}{Lemma}
\newtheorem{remark}{Remark}

\numberwithin{equation}{section}

\usepackage{color} 
\allowdisplaybreaks
\usepackage{flushend}
\usepackage{verbatim} 

\title{Operator learning for prescribed-time stabilization of reaction-diffusion systems}
\subjclass[2020]{}
\keywords{}

\author{Kaijing Lyu\textsuperscript{\,$\dagger$}}
\address{\textsuperscript{$\dagger$}\, School of Mathematics and Statistics,  Beijing Institute of Technology, 100081 Beijing, China.
	\newline \indent \textsuperscript{$\ast$}\, Chair of Computational Mathematics, DeustoTech, University of Deusto, Avenida de las Universidades 24, 48007 Bilbao, Basque Country, Spain
} 
\email{kjlv@bit.edu.cn}

\thanks{This project has received funding from the European Research Council (ERC) under the European Union's Horizon 2030 research and innovation programme (grant agreement NO: 101096251-CoDeFeL). UB was partially supported by the Grant PID2023-146872OB-I00-DyCMaMod of MICIU (Spain) and by the COST Actions CA24122 - multiscale Stochastics, Patterns, and Analysis of Combinatorial Environments and CA24136 - Interactions between Control Theory and Machine Learning.}

\author{Umberto Biccari\textsuperscript{\,$\ast$}} 
\email{umberto.biccari@deusto.es}

\author{Jun-Min Wang\textsuperscript{\,$\dagger$}}  
\email{jmwang@bit.edu.cn}

\begin{document}      

\begin{abstract}
This paper addresses boundary prescribed-time stabilization of a one-dimensional heat equation with spatially and temporally varying coefficients. In contrast to asymptotic or exponential stabilization, prescribed-time stabilization ensures convergence to equilibrium within a user-defined time that is independent of the initial condition, a property that is particularly attractive in applications with stringent transient performance requirements. The backstepping design for this problem requires solving, at each time instant, a two-dimensional time-dependent kernel Partial Differential Equation (PDE) whose solution continuously varies with the plant coefficients. The repeated numerical solution of this parabolic kernel PDE results in a prohibitive computational burden, thereby limiting real-time applicability. To overcome this limitation, we propose a neural-operator-based approximation of the mapping from the time-varying system coefficient to the corresponding backstepping kernel. The operator is trained offline using representative solutions of the kernel PDE and subsequently deployed online to generate the required time-varying kernels in real time. We establish, via Lyapunov analysis, that the resulting neural-operator-based controller preserves prescribed-time stability provided that the operator approximation error satisfies an explicit bound. Furthermore, we investigate a direct approximation of the full feedback law mapping the plant parameter functions and state measurements to the boundary control input. For this setting, we prove semiglobal practical prescribed-time stability of the closed-loop system. Numerical experiments demonstrate that the proposed approach reduces the computational cost of kernel generation by several orders of magnitude, thereby enabling real-time prescribed-time stabilization for heat equations with spatially and temporally varying coefficients.
\end{abstract}

\maketitle 

\section{Introduction}

Boundary stabilization of parabolic partial differential equations has been extensively developed through constructive feedback design methodologies aimed at ensuring asymptotic or exponential decay of solutions. Among these, backstepping transformations have emerged as a systematic and powerful framework for deriving explicit boundary feedback laws for reaction–diffusion systems \cite{smyshlyaev2005control,krstic2008backstepping,meurer2009tracking}. The method relies on a Volterra transformation whose kernel solves an auxiliary PDE posed on a triangular domain, yielding well-posed closed-loop systems with precise decay estimates. These techniques have been successfully extended to settings involving variable coefficients, nonstandard boundary conditions, delay effects, and output-feedback configurations \cite{espitia2019boundary,espitia2021predictor,zekraoui2025output,zhang2025prescribed}.

In this paper, we consider a one-dimensional reaction–diffusion equation on a bounded interval with a Robin boundary condition at one end and a Dirichlet boundary actuation at the other, where the reaction coefficient depends on both space and time. While for time-invariant coefficients the associated kernel equation is stationary and can be solved offline \cite{smyshlyaev2005control}, the time-varying case leads to a two-dimensional parabolic evolution equation for the kernel. Although well-posedness results and constructive designs are available under suitable regularity assumptions \cite{meurer2009tracking}, the kernel must then be updated continuously over the time horizon. At this regards we shall stress that, when the reaction coefficient depends on both space and time, the associated kernel equation generally does not admit a closed-form analytical solution and must be solved numerically. This yields the repeated numerical solution of a two-dimensional PDE, which significantly increases the computational burden and poses a major challenge for real-time implementation. 

The need for real-time implementability is not merely technical, but intrinsic to many distributed-parameter control applications. In large-scale networked and physical systems, control inputs must be computed at rates compatible with the system dynamics and available sensing and actuation infrastructure. For instance, traffic flow regulation based on hyperbolic PDE models requires control updates at operational time scales dictated by vehicle flow and measurement frequencies \cite{yu2019traffic}.

Similarly, in spatially distributed chemical or thermal processes described by reaction–diffusion equations, boundary control actions must be computed online using current state measurements. In such settings, repeatedly solving a two-dimensional kernel PDE during operation may be computationally prohibitive, particularly under fine spatial discretizations or embedded hardware constraints. Comparable considerations arise in continuum models of multi-agent systems \cite{meurer2011finite}, tubular reactors \cite{orlov2009discontinuous}, and propulsion or thermal management systems \cite{smyshlyaev2004closed}, where feedback laws must be evaluated reliably and efficiently within strict timing constraints.

These observations highlight that, beyond theoretical feasibility of the feedback law, its computational realizability in real time is a central practical concern. In this framework, the main contribution of the present work is to address this computational bottleneck. We employ a Neural Operator (NO) to learn, offline, the mapping from the time-varying reaction coefficient to the corresponding backstepping kernel. Once trained, the NO provides a fast surrogate for kernel evaluation, thereby eliminating the need for repeated online solution of the kernel PDE. We further show that, under suitable approximation bounds, the resulting closed-loop system preserves the desired terminal steering property. In this sense, the neural-operator approximation is not merely a numerical acceleration tool, but an integral component of a control design equipped with rigorous guarantees.

\subsection{Neural-operator approximation for prescribed-time backstepping}

Recent works have shown that NOs can successfully approximate gain kernels arising in backstepping-based PDE control. In particular, NOs have been employed to bypass repeated gain computations and accelerate controller implementation \cite{bhan2023operator,bhan2024adaptive2}. Among operator-learning architectures, DeepONet \cite{lu2021learning,Approximation} has been used to approximate backstepping kernels for $2\times2$ hyperbolic systems \cite{wang2025backstepping}. Related developments include neural-operator-based adaptive control, gain scheduling, and delay-compensated backstepping for various classes of PDEs \cite{bhan2023neural,krstic2024neural,lyu2025robust,wang2025deep,lamarque2025gain,lv2025neural01}.

These contributions demonstrate that operator learning can accurately reproduce stationary or parameter-dependent kernel maps, enabling efficient implementation of backstepping controllers. However, the existing literature primarily addresses time-invariant or slowly varying kernels that can be computed offline and subsequently approximated once.

The present work considers a substantially different setting: prescribed-time stabilization of a reaction–diffusion equation with spatially and temporally varying coefficient. Prescribed-time designs require a time-varying backstepping transformation, leading to a kernel that satisfies a two-dimensional parabolic evolution equation. In contrast to stationary kernel equations, this kernel evolves continuously over the time horizon and must, in principle, be recomputed at each time instant. This renders classical backstepping implementations computationally prohibitive for real-time applications.

Building upon the time-varying boundary feedback and predictor-based fixed-time designs developed in \cite{espitia2019boundary,espitia2021predictor}, we construct a prescribed-time stable target system and transfer this property to the original plant through a characterization of the bounded invertibility of the associated time-dependent backstepping transformation. The resulting design inherently involves genuinely time-evolving gain kernels.

To overcome the induced computational bottleneck, we approximate, via DeepONet, the operator that maps the time-dependent reaction coefficient to the corresponding time-varying backstepping kernel. The neural operator is trained offline using numerically generated kernel solutions and subsequently evaluated online to provide real-time kernel predictions, thereby eliminating repeated solution of the kernel PDE while preserving the backstepping structure.

Beyond computational acceleration, we provide a rigorous stability analysis of the resulting closed-loop system. Using Lyapunov methods combined with neural-operator approximation bounds, we show that the prescribed-time stability guarantees of the nominal backstepping design are preserved under sufficiently small operator approximation errors. We further extend the framework to direct approximation of the full feedback map, establishing semiglobal practical prescribed-time stability.

To the best of our knowledge, this is the first work that combines prescribed-time backstepping for parabolic PDEs with neural-operator approximation of a genuinely time-evolving kernel, together with explicit stability guarantees.

\subsection{Contributions and organization}

The main contributions of this paper are summarized as follows:

\begin{itemize}
    \item We develop a neural-operator-based framework for the prescribed-time stabilization of a heat equation with spatially and temporally varying reaction coefficient. The prescribed-time design induces a genuinely time-evolving backstepping kernel governed by a two-dimensional parabolic PDE. To avoid repeated online solution of this kernel equation, we approximate, via a NO, the mapping from the time-dependent coefficient to the corresponding backstepping kernel. 
    \item We establish rigorous prescribed-time stability guarantees for the closed-loop system obtained by replacing the exact kernel with its neural-operator approximation. Extending the Lyapunov analysis in \cite{espitia2019boundary} to encompass operator-approximation errors, we derive explicit bounds ensuring that the prescribed-time stability properties of the nominal backstepping design are preserved under sufficiently small approximation errors.
    \item Beyond kernel approximation, we investigate direct neural-operator approximation of the full prescribed-time boundary feedback map from plant parameters and state measurements to the control input. For this implementation-oriented design, we prove semiglobal practical prescribed-time stability and demonstrate significant reduction of the online computational burden.
\end{itemize}

The remainder of the paper is organized as follows. Section~\ref{sec:problem_statement} introduces the heat equation under consideration and presents the prescribed-time backstepping design.
Section~\ref{sec:NO} establishes the existence of a NO approximating the time-varying backstepping kernel. Section~\ref{sec:NO_stabilization} provides the Lyapunov-based stability analysis of the closed-loop system under the neural-operator-approximated kernel, proving preservation of prescribed-time stability. Section~\ref{sec:NO-approximated backstepping control law} investigates direct neural-operator approximation of the full feedback law and establishes semiglobal practical prescribed-time stability. Section~\ref{sec:simulations} presents numerical experiments validating the theoretical results and illustrating the computational efficiency of the proposed approach. Finally, Section~\ref{sec:conclusions} concludes the paper and outlines directions for future research.

\textbf{Notation:} In what follows, for all $p\in[1,+\infty]$ we will denote by $\|\cdot\|_p$ and $\|\cdot\|_{p,T}$ the standard norms over the spaces $L^p(0,1)$ and $L^p((0,1)\times (0,T))$, respectively. 

\section{Problem formulation}\label{sec:problem_statement}
In this paper, we consider the following reaction-diffusion systems with space- and time-dependent coefficient
\begin{equation}\label{eq:heat_1}
	\begin{cases} 
		v_t = \theta v_{xx} + \lambda v, & (x,t)\in (0,1)\times (0,T)
		\\
		v_x(0, t) = qv(0,t), & t\in(0,T)
		\\
		v(1, t) = U(t), & t\in (0,T)
		\\
		v(x,0) = v_0(x), & x\in (0,1) 
	\end{cases}
\end{equation}

Here, $T>0$ is a given time horizon for the PDE state $v(x,t): (0,1) \times (0,T) \rightarrow \mathbb{R}$, $U(t): (0,T)\to\mathbb{R}$ is a $C^\infty(0,T)$ control function, $\theta >0$ and $q>0$ are given constants, and 
\begin{align*}
    \lambda(x,t) \in G_\alpha(0,T;C^0(0,1)) \quad\text{ with } \alpha \in [1,2],
\end{align*}
is a variable coefficient belonging to the Gevrey class $G_\alpha$ defined as follows.
\begin{definition}(Gevrey class\cite{rodino1993linear})
Let $\alpha\geq 1$, $X$ be a Banach space, and fix a constant $D>0$. We say that a function $f:(0,T)\to X$ belongs to the Gevrey class of order $\alpha$, and we write $f\in  G_\alpha(0,T;X)$, if 
\begin{equation}\label{eq:Gevrey_norm}
    \begin{array}{l}
        f\in C^\infty(0,T;X)
        \\
        \displaystyle\|f\|_{G_\alpha}=\sup_{n\in\mathbb{N}^\ast} \left(\frac{\sup_{t\in (0,T)}\|\partial_t^n f(t)\|_X}{D^{n+1} (n!)^{\alpha}}\right)<+\infty. 
    \end{array}
\end{equation}
\end{definition}

Notice that, with $U\in C^\infty(0,T)$ and $\lambda \in G_\alpha(0,T;C^0(0,1))$, for any $v_0\in L^2(0,1)$ \eqref{eq:heat_1} admits a unique mild solution $v\in C(0,T;L^2(0,1))$ and, in particular, $v(\cdot,t)\in H^2(0,1)$ for any $t>0$.

Our main concern in this work will be the prescribed-time stabilization of \eqref{eq:heat_1}, that is, the design of a boundary feedback control $U(t)$ so that for prescribed time $T>0$ and initial datum $v_0$ the associated solution of \eqref{eq:heat_1} satisfies
\begin{align}\label{eq:stab}
    \lim_{t\to T} v(x,T)=0, &\quad \text{ for all } x\in(0,1)        
\end{align}

To this end, we will adopt the classical backstepping approach. This method is based on employing the following Volterra-type transformation
\begin{align}\label{eq:backstepping}
	w(x,t) = \mathcal{K}(t)[v(t, \cdot)](x) =v(x,t) - \int_0^x k(x, y, t)v(y,t)\,dy, 
\end{align}
where the kernel $k(x,y,t)$ is to be determined. This transformation is triangular in space and, under suitable boundedness conditions on $k$, defines an invertible mapping between $v$ and $w$.

The idea of backstepping is to choose the kernel $k$ and the boundary input $U(t)$ so that the transformed variable $w$ satisfies a target system with prescribed stability properties. In particular, we aim to obtain
\begin{equation}\label{eq:target1}
	\begin{cases}
		w_t = \theta w_{xx} - c(t)w, & (x,t)\in (0,1)\times (0,T)
		\\
		w_x(0,t) = qw(0,t), & t\in (0,T) 
		\\
		w(1,t) = 0, & t\in (0,T)
		\\
		  w(x,0) =w_0(x), & x\in (0,1)
	\end{cases}
\end{equation}
where 
\begin{align*}
    w_0(x) = v_0(x)-\int_0^x k(x,y,0)v_0(y)\,dy
\end{align*}
and $c(t)$ is a positive design function whose properties will be specified later.

Equation \eqref{eq:target1} has been obtained by differentiating \eqref{eq:backstepping} with respect to time and space and substituting the original equation \eqref{eq:heat_1}. By requiring that the resulting expression coincides with the right-hand side of \eqref{eq:target1} for all admissible states $v$, we obtain the kernel equation
\begin{align}\label{eq:kernel_1}
	\begin{cases}
		k_t =\theta\big(k_{xx}- k_{yy}\big)  - \gamma(y,t)k, & (x,y,t)\in\mathcal P
		\\
		k_y(x, 0, t) = qk(x,0,t)  
		\\
		\displaystyle k(x, x, t) = -\frac{1}{2\theta}\int_{0}^{x}\gamma(\xi,t)d\xi
	\end{cases}
\end{align}
with 
\begin{align*}
	\gamma(x,t) = \lambda(x,t)+c(t) 
\end{align*}
and where we have defined 
\begin{align*}
    \mathcal{P}  = \{(x, y, t) \in \mathbb{R}^2 \times (0, T): 0 \leq y \leq x \leq 1\}.    
\end{align*}

Thus, the kernel is constructed so that the reaction term $\lambda(x,t)v$ in the original system \eqref{eq:heat_1} is transformed into the damping term $-c(t)w$ in the target system \eqref{eq:target1}.
In addition, the control input $U(t)$ in \eqref{eq:heat_1} becomes 
\begin{equation}\label{eq:U}
	U(t) =  \int_{0}^{1}k(1,y,t)v(y,t)\,dy, 
\end{equation}
which depends only on the state $v$. Although the stability analysis is carried out in terms of the transformed variable $w$, the implemented control law is expressed in terms of $v$, since $w$ is introduced solely as a design and analysis tool.

The role of the coefficient $c(t)$ is central in the prescribed-time design. In the target system \eqref{eq:target1}, the term $-c(t)w$ acts as a distributed damping mechanism, and $c(t)$ has to be designed so to enforce increasingly strong damping near the prescribed time $T$. As we shall see, this is ensured by the following properties,
\begin{equation}\label{eq:c(t)}
    c\in C^\infty(0,T) \quad\text{ with }\quad
    \begin{array}{l}
        c(t)>0 
        \\
        \displaystyle\lim_{t \to T} \int_0^t c(\tau)d \tau =+\infty. 
    \end{array}
\end{equation}

Under these conditions \eqref{eq:c(t)}, Lyapunov analysis then shows that $w$ solution to \eqref{eq:target1} satisfies \eqref{eq:stab}. Finally, since the backstepping transformation \eqref{eq:backstepping} is invertible and bounded, the stabilization of $w$ implies the stabilization of the original state $v$.

It is worth noticing that, unlike the classical stationary backstepping kernel equations arising in time-invariant settings, equation \eqref{eq:kernel_1} contains a partial derivative with respect to time. This is inherited by the time-dependence of the coefficient $\lambda$, and as consequence, the kernel must be characterized as the solution of a two-dimensional parabolic evolution equation on the triangular domain $\mathcal P$. The presence of the time derivative requires a well-posedness analysis in function spaces that control both spatial and temporal regularity. In particular, since the prescribed-time design involves a time-varying coefficient $\gamma(x,t) = \lambda(x,t) + c(t)$, whose temporal behavior directly affects the evolution of the kernel, suitable regularity assumptions on $\lambda$ are necessary to guarantee existence of a strong solution and continuous dependence on the data.

This motivates our assumption $\lambda \in G_\alpha(0,T;C^0(0,1))$, with $\alpha \in [1,2]$, which ensures sufficient control of time derivatives of the coefficients. This is essential to obtain a strong solution $k(x,y,t)$ that inherits Gevrey regularity in time and remains well defined up to $t = T$, despite the possible singular behavior of $c(t)$ in prescribed-time stabilization. 

Kernel equations with space- and time-dependent coefficients have been rigorously investigated in \cite{meurer2009tracking,vazquez2008control,jadachowski2012efficient}. In particular, \cite{meurer2009tracking} establishes existence of a strong solution via an integral-operator formulation and a successive approximation argument, and proves that the resulting kernel is of Gevrey order $\alpha \leq 2$ with respect to time. Based on these results, we can deduce the following lemma.

\begin{lemma}\label{Bound} 
Assume that $\lambda \in G_\alpha(0,T; C^0(0,1))$ for some parameter $\alpha\in[1,2]$. Let 
\begin{align*}
    \gamma(x,t) = \lambda(x,t) + c(t)    
\end{align*}
with $c \in C^\infty(0,T)$. Then the kernel equation \eqref{eq:kernel_1} admits a unique strong solution
\begin{align*}
    k \in G_\alpha(0,T; C^2(\Omega)),    
\end{align*}
where 
\begin{align}\label{eq:Omega}
    \Omega = \{(x,y)\in \mathbb{R}^2\,:\, 0 \leq y \leq x \leq 1 \}.
\end{align}
Moreover, there exits a constant $C>0$, such that
\begin{align}\label{eq:k_sup_norm}
    \|k(\cdot,\cdot,t)\|_\infty\leq C
\end{align}
\end{lemma}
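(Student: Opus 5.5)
The plan is to follow the strategy of \cite{meurer2009tracking}. We rewrite the kernel equation \eqref{eq:kernel_1} as an integral equation, build its solution by successive approximations, and read off both the Gevrey regularity and the uniform bound \eqref{eq:k_sup_norm} from one majorant series.

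First, pass to characteristic coordinates $\xi = x+y$, $\eta = x-y$, so that $\theta(\partial_{xx}-\partial_{yy}) = 4\theta\partial_{\xi\eta}$. Write $G(\xi,\eta,t)=k(x,y,t)$. The diagonal condition in \eqref{eq:kernel_1} then prescribes $G(\xi,0,t)$. The Robin condition at $y=0$ becomes a condition on the line $\xi=\eta$, which we absorb through an explicit term in $q$, exactly as in the time-invariant Robin case of \cite{smyshlyaev2005control}. Integrating twice in $(\xi,\eta)$ gives a fixed-point problem
\begin{align*}
G = G_0 + \mathcal{F}[G],
\end{align*}
where $G_0$ depends only on $\int_0^x\gamma(s,t)\,ds$ and $q$. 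The operator $\mathcal{F}$ is a double Volterra integral applied to $\tfrac{1}{4\theta}\big(\partial_t G + \gamma G\big)$, together with the boundary term coming from $q$.

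Second, run the successive approximations $G^{n+1} = \mathcal{F}[G^n]$ and estimate every time derivative $\partial_t^m G^n$ uniformly on $\Omega$. We do this by induction on $n$, with an ansatz of the form
\begin{align*}
\sup_{t}\|\partial_t^m G^n(\cdot,\cdot,t)\|_{\infty} \le M\,\frac{D_1^{m+n+1}\,\big((m+n)!\big)^{\alpha}\,(\xi+\eta)^{n}}{(n!)^{2}}.
\end{align*}
The Gevrey bound on $\lambda$ feeds in through the Leibniz rule for $\partial_t^m(\gamma G^n)$. The double integration in $(\xi,\eta)$ produces the factor $(n!)^{-2}$, and this factor has to absorb the loss $((m+n)!)^\alpha$ caused by the extra $\partial_t$ at each step. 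That absorption works precisely because $\alpha\le 2$; this is where the restriction $\alpha\in[1,2]$ enters. It follows that $\sum_n G^n$ converges, together with all of its $t$-derivatives and, by the same estimates differentiated in $(\xi,\eta)$, its second spatial derivatives. Hence the limit lies in $G_\alpha(0,T;C^2(\Omega))$ and is a strong solution. Uniqueness comes from applying the same majorant argument to the difference of two solutions, which solves the homogeneous integral equation, so every iterate bound forces it to vanish.

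I expect the main obstacle to be the role of $c(t)$, which by \eqref{eq:c(t)} blows up at $t=T$. The induction needs Gevrey bounds on $\gamma=\lambda+c$, yet the lemma only assumes $c\in C^\infty(0,T)$. Two routes are available. The first is to notice that $c(t)$ depends only on time and can be removed by a gauge transform. Setting
\begin{align*}
k(x,y,t) = e^{-\int_0^t c(s)\,ds}\,\tilde k(x,y,t)
\end{align*}
cancels the $-c k$ term, but it leaves the boundary datum $-\tfrac{1}{2\theta}\int_0^x c\,d\xi = -\tfrac{x c(t)}{2\theta}$ on the diagonal. The second route is therefore to work on each compact interval $[0,T']$ with $T'<T$, where $c$ is smooth and bounded. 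There the argument above gives existence, uniqueness and regularity, and we obtain the solution on $(0,T)$ by letting $T'\to T$. For \eqref{eq:k_sup_norm}, the series bound gives $\|k(\cdot,\cdot,t)\|_\infty\le C_0\,e^{C_1\|\gamma(\cdot,t)\|_\infty}$. To make $C$ independent of $t$, I would take $\sup_t$ here under the standing assumption, implicit in the lemma, that the relevant Gevrey norm of $\gamma$ is finite on $(0,T)$. If that assumption fails, the constant should be read as uniform on $[0,T']$ for each $T'<T$.
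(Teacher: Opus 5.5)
Your route is the one the paper relies on. Its proof just invokes \cite[Theorems 8 and 9]{meurer2009tracking}, which is the characteristic-coordinate, successive-approximation, Gevrey-majorant argument you sketch. The gap is in how you dispose of $c(t)$, and it cannot be closed, because the statement itself overreaches. In the intended setting, \eqref{eq:c(t)} forces $c$ to be unbounded near $T$, since a bounded $c$ has bounded integral on $(0,T)$. Meanwhile $\lambda$ stays bounded, because its time derivative is controlled by the Gevrey norm. The diagonal condition in \eqref{eq:kernel_1} then gives
\[
\|k(\cdot,\cdot,t)\|_\infty \ \ge\ |k(1,1,t)| = \frac{1}{2\theta}\Big|\int_0^1\lambda(\xi,t)\,d\xi + c(t)\Big| ,
\]
which is unbounded as $t\to T$. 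So \eqref{eq:k_sup_norm} with a $t$-independent $C$ is false. So is $k\in G_\alpha(0,T;C^2(\Omega))$ with the supremum over all of $(0,T)$, since a finite Gevrey norm bounds $\partial_t k$ and hence $k$. Your gauge computation already exhibits this obstruction $-xc(t)/(2\theta)$. But the ``standing assumption implicit in the lemma'' that $\gamma$ has finite Gevrey norm on $(0,T)$ is not implicit: it contradicts the prescribed-time requirement. You should state plainly that only a local version holds, with constants on $[0,T']$ that blow up as $T'\to T$. The paper's citation does not resolve this either, since any argument of this type needs Gevrey bounds on the full coefficient $\gamma=\lambda+c$. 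Your fallback on $[0,T']$ also has a hole. Smoothness and boundedness of $c$ there do not suffice, because each iterate consumes one more time derivative of $\gamma$. Your majorant therefore needs $\sup_{[0,T']}|c^{(n)}|\le D^{n+1}(n!)^\alpha$, which a merely $C^\infty$ function can violate, and then $\sum_n G^n$ need not converge. Since $\partial_{xx}-\partial_{yy}$ is a wave operator, \eqref{eq:kernel_1} is not a well-posed evolution in $t$, so Gevrey control of $c$ cannot be avoided in this construction. You must add the hypothesis that $c$ is Gevrey of order $\alpha$ on each $[0,T']$. This holds, e.g., for $c(t)=2T/(T-t)^2$, which is analytic on $[0,T)$.

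Second, the step ``the same estimates differentiated in $(\xi,\eta)$'' does not give $k(\cdot,\cdot,t)\in C^2(\Omega)$ when $\lambda(\cdot,t)$ is only continuous. If $k(\cdot,\cdot,t)\in C^2(\Omega)$, then $x\mapsto k(x,x,t)=-\frac{1}{2\theta}\int_0^x\gamma(\xi,t)\,d\xi$ is $C^2$, which forces $\lambda(\cdot,t)\in C^1[0,1]$. In your scheme the obstruction is $\partial_\xi^2G(\xi,0,t)=-\frac{1}{8\theta}\partial_x\gamma(\xi/2,t)$, so you need a Gevrey hypothesis on $\lambda$ with values in $C^1$ in space. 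There are also two smaller points. Your bound $\|k(\cdot,\cdot,t)\|_\infty\le C_0e^{C_1\|\gamma(\cdot,t)\|_\infty}$ is the time-invariant estimate. Here $\mathcal{F}$ contains $\partial_tG$, so the constant depends on the Gevrey constants of $\gamma$ (all its time derivatives), not on $\gamma(\cdot,t)$ alone. At the endpoint $\alpha=2$, the majorant terms $D_1^{n}((m+n)!)^2/(n!)^2$ are only of geometric type in $n$. Convergence on all of $\Omega$ then requires the Gevrey constant to be small relative to $\theta$; only for $\alpha<2$ does the factor $(n!)^{\alpha-2}$ absorb an arbitrary $D$.
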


\begin{proof}
The proof follows immediately from \cite[Theorems 8 and 9]{meurer2009tracking} and is omitted here for the sake of brevity.
\end{proof}

Notice that, using \eqref{eq:U} and Lemma \eqref{Bound}, one can readily prove that 
\begin{align}\label{eq:U_reg}
    U\in C^\infty(0,T)    
\end{align}

Finally, it is known (see \cite{krstic2008boundary,smyshlyaev2005control}) that the backstepping transformation \eqref{eq:backstepping} has an inverse 
\begin{align}\label{eq:inverse_backstepping}
	v(x,t) = \mathcal{L}[w(\cdot,t)](x) = w(x,t)+\int_0^x l(x,y,t)w(y,t)\,dy 
\end{align}
where the inverse kernel $l(x,y,t)$ satisfies the following PDE system
\begin{align}\label{eq:kernel_2}
	\begin{cases}
		l_t =\theta\big(l_{xx}- l_{yy}\big)  +\gamma l, & (x,y,t)\in\mathcal P
		\\
		l_y(x, 0, t) = ql(x,0,t)  
		\\[5pt]
		\displaystyle l(x, x, t) = -\frac{1}{2\theta}\int_0^x\gamma(\xi,t)\,d\xi
	\end{cases}
\end{align}

Moreover, an analogous result as in Lemma \ref{Bound} can be established also fro the solution $l$ of \eqref{eq:kernel_2} and, in particular, we have the estimate 
\begin{align}\label{eq:l_sup_norm}
    \|l(\cdot,\cdot,t)\|_\infty\leq C.
\end{align}

\section{DeepONet approximation of the time-varying kernel}\label{sec:NO}

To implement the controller \eqref{eq:U}, it is necessary to compute the time-varying kernels $k(x,y,t)$ by solving \eqref{eq:kernel_1}. This, however, is most often computationally expensive, especially for systems with high spatial resolution. This computational burden limits the applicability of backstepping-based controllers in real-time scenarios.

To overcome this challenge, we leverage the DeepONet to learn the mapping from system parameters to the kernel functions. Once trained, DeepONet enables rapid prediction of $k$, significantly reducing the online computation time and making real-time implementation feasible. Moreover, the following result, shows that DeepONet can indeed produce a NO the effectively approximates our kernel functions. 
\begin{theorem}\label{th:NO}
Fix a compact set $A\subset G_\alpha\big(0,T;C^0(0,1)\big)$ and define the operator 
\begin{align*}
    & \mathcal{K}: A \mapsto G_\alpha\big(0,T;C^2(\Omega)\big)
    \\
    &\mathcal{K}(\lambda)(x,y,t) = k(x,y,t),   	
\end{align*}
with $\Omega$ given by \eqref{eq:Omega} and where $k(x,y,t)$ is a solution of \eqref{eq:kernel_1}. Then, for all $\epsilon> 0$, there exists a NO 
\begin{align*}
    \mathcal{\hat{K}}: \Omega\to G_\alpha\big(0,T;C^2(0,1)\big)    
\end{align*}
such that for all $(x,y,t) \in \mathcal{P}$ 
\begin{align}\label{eq:NO_est1}
    \|\mathcal{K}(\lambda)-\hat{\mathcal{K}}(\lambda)\|_{G_\alpha}\leq\epsilon.
\end{align}
\end{theorem}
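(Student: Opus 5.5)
The plan is to reduce Theorem \ref{th:NO} to the universal approximation theorem for DeepONet \cite{lu2021learning,Approximation}. That theorem states that any \emph{continuous} operator between a compact subset of a Banach space of input functions and a space of continuous output functions can be approximated uniformly, on the compact set, by a DeepONet. Two ingredients are therefore needed: that $\mathcal{K}$ is well defined, and that $\mathcal{K}$ is continuous from $A$, with the topology of $G_\alpha(0,T;C^0(0,1))$, into $G_\alpha(0,T;C^2(\Omega))$. Well-definedness is given by Lemma \ref{Bound}, since each $\lambda\in A$ yields a unique strong solution $k\in G_\alpha(0,T;C^2(\Omega))$.

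\textbf{Step 1: Lipschitz continuity of the kernel map.} Take $\lambda_1,\lambda_2\in A$ with kernels $k_1,k_2$, and set $\tilde k=k_1-k_2$ and $\tilde\gamma=\lambda_1-\lambda_2$. Since $c(t)$ is common to both, the difference satisfies a linear problem of the same form as \eqref{eq:kernel_1}:
\begin{align*}
\tilde k_t=\theta(\tilde k_{xx}-\tilde k_{yy})-\gamma_1\tilde k-\tilde\gamma\, k_2,
\end{align*}
with the Robin condition at $y=0$ and boundary datum $-\frac{1}{2\theta}\int_0^x\tilde\gamma\,d\xi$ on $y=x$. I would rewrite this, as in \cite{meurer2009tracking}, in characteristic variables $\xi=x+y$, $\eta=x-y$ as a Volterra-type integral equation. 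The unknown is then $\tilde k$, the source is linear in $\tilde\gamma$, and the coefficients are bounded using \eqref{eq:k_sup_norm}. The successive approximation series of \cite[Theorems 8 and 9]{meurer2009tracking} converges with Gevrey bounds depending only on $\|\gamma_1\|_{G_\alpha}$ and $\|k_2\|_{G_\alpha}$, and these are uniformly bounded because $A$ is compact. Running the series with source $\tilde\gamma$ gives
\begin{align*}
\|\tilde k\|_{G_\alpha(0,T;C^2(\Omega))}\le L_A\|\lambda_1-\lambda_2\|_{G_\alpha(0,T;C^0(0,1))}.
\end{align*}

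\textbf{Step 2: applying universal approximation.} With continuity established, I apply the DeepONet universal approximation theorem. Its branch net takes finitely many sensor values of $\lambda$, and its trunk net takes $(x,y,t)$. This yields $\hat{\mathcal K}$ with $\sup_{\lambda\in A}\|\mathcal K(\lambda)-\hat{\mathcal K}(\lambda)\|\le\epsilon$ in the sup norm over $\mathcal P$. To upgrade this to the $G_\alpha$ norm in \eqref{eq:NO_est1}, I would use smooth activations, for which the network output is $C^\infty$ in $(x,y,t)$. I would then invoke the simultaneous approximation of a function together with its derivatives by such networks. For derivative control up to order two in space and all orders in time, this is applied with the weighted supremum over $n$ in \eqref{eq:Gevrey_norm} truncated at some $N$. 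The tail $n>N$ is handled because both $\mathcal K(\lambda)$ and the network have summable Gevrey weights $D^{n+1}(n!)^\alpha$ for a slightly enlarged constant $D$.

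\textbf{Main obstacle.} The hardest part is this final upgrade to the Gevrey norm, since universal approximation results are stated in $C^0$ or $C^m$ and not in Gevrey-type norms. My first attempt would be to approximate in $C^N$ and control the tail through the uniform Gevrey bound from Lemma \ref{Bound}. That uniform bound holds over $A$ by compactness and Step 1. If this fails, I would fall back to interpreting \eqref{eq:NO_est1} pointwise on $\mathcal P$, as the statement's ``for all $(x,y,t)\in\mathcal P$'' suggests, which only requires the $C^0$ version.
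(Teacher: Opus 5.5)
Your skeleton (continuity of $\lambda\mapsto k$, then the DeepONet universal approximation theorem) is the paper's proof. That proof is a single sentence: it asserts that Lemma \ref{Bound} gives continuity of $\mathcal K$ and invokes \cite[Theorem 2.1]{Approximation}. You are right that continuity is not actually contained in Lemma \ref{Bound}. The cited theorem is a sup-norm result: it gives $|k(x,y,t)-\hat k(x,y,t)|\le\epsilon$ uniformly over a compact domain and over $\lambda\in A$. So your fallback is, in substance, what the paper proves. It is also the only form used later, since the proof of Theorem \ref{theorem1} needs just $|\hat k(1,y,t)-k(1,y,t)|\le\epsilon$.

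Where you go beyond the paper, the argument does not close. First, the upgrade to the $G_\alpha$ norm fails. Simultaneous-approximation theorems control $\partial_t^n(\hat k-k)$ only for $n\le N$ and say nothing about $\partial_t^n\hat k$ for $n>N$. A network with analytic activation obeys $\|\partial_t^n\hat k\|\lesssim MR^n n!$, with $M,R$ set by its weights, which grow as accuracy improves. The index beyond which the network's tail drops below $\epsilon D^{n+1}(n!)^\alpha$ therefore depends on the network, which you choose after fixing $N$; the argument is circular. For $\alpha=1$ the network need not even have finite $G_1$ norm with the given $D$.

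The obstruction is structural, not technical. For fixed $D$, the functions with $\sup_t\|\partial_t^n f\|/(D^{n+1}(n!)^\alpha)\to0$ form a closed subspace, and for $\alpha>1$ it contains every fixed analytic network. A kernel whose ratio has $\limsup_n$ equal to $\eta>0$ is therefore at $G_\alpha$-distance at least $\eta$ from every such network. Enlarging $D$ changes the norm in \eqref{eq:NO_est1}, so it proves a different statement, and it still leaves the network's tail uncontrolled.

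Step 1 also overclaims, in ways that affect even the fallback. A Lipschitz bound into $C^2(\Omega)$ from the $C^0$-in-space norm of $\lambda$ cannot hold. Differentiating the diagonal condition twice gives $(\partial_x+\partial_y)^2\tilde k(x,x,t)=-\partial_x(\lambda_1-\lambda_2)(x,t)/(2\theta)$, which no $C^0$ norm controls. The characteristic-variable iteration yields $C^0$ (at best $C^1$) estimates, which is all the pointwise version needs.

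More seriously, compactness of $A$ does not bound $\|\gamma_1\|_{G_\alpha}$, because $\gamma_1=\lambda_1+c$ and \eqref{eq:c(t)} forces $c$ to be unbounded near $T$. Indeed $k(1,1,t)=-\frac{1}{2\theta}\int_0^1\lambda(\xi,t)\,d\xi-\frac{c(t)}{2\theta}$ is unbounded on $(0,T)$. Hence \eqref{eq:k_sup_norm} fails, and no DeepONet whose trunk net is continuous on $[0,T]$ can approximate $k$ uniformly on all of $\mathcal P$. This defect comes from Lemma \ref{Bound} and is equally present in the paper's proof. A correct argument, yours or theirs, must restrict to $t\in[0,T']$ with $T'<T$, or build the singular time behaviour of $c$ into the architecture.
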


\begin{proof}
Since Lemma \ref{Bound} ensures the continuity of the operator $\mathcal{K}$, the result follows immediately from \cite[Theorem 2.1]{Approximation}.
\end{proof}

Once we have learned the NO $\hat{\mathcal{K}}$, we can define the following NO-approximated nominal control law
\begin{align}\label{eq:U_NO}
	\hat{U}(t)=\int_{0}^{1}\hat{k}(1,y,t)v(y,t)dy
\end{align}
and use it to stabilize our system. Notice that also for this NO-control, the same as for the original one $U(t)$, we have 
\begin{align}\label{eq:U_hat_reg}
    \hat U\in C^\infty(0,T)    
\end{align}

Moreover, the effectiveness of \eqref{eq:U_NO} as a controller will be demonstrated in the next Section \ref{sec:NO_stabilization}. 

\subsection{Lyapunov analysis under the NO-approximated control law $\lambda\mapsto k$}\label{sec:NO_stabilization}
In this section, we prove that the NO-approximated control law \eqref{eq:U_NO} can stabilize the original system \eqref{eq:heat_1}. More precisely, we show the following sufficient condition for stabilization.

\begin{theorem}\label{theorem1}
There exits constant $\epsilon^*>0$ such that, for all NO approximation $\hat{k}$ of accuracy $\epsilon \in (0,\epsilon^*)$ provided by Theorem \ref{th:NO}, the feedback control \eqref{eq:U_NO} stabilizes the system \eqref{eq:heat_1}, i.e. for any $T>0$ and any initial condition $v_0\in L^2(0,1)$ the associated solution $v$ satisfies \eqref{eq:stab}.
\end{theorem}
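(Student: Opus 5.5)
We plan to apply the exact backstepping transformation \eqref{eq:backstepping}, with the exact kernel $k=\mathcal K(\lambda)$, to the closed-loop system under $\hat U$. The interior equation and the Robin condition at $x=0$ are handled exactly as in the nominal derivation, since they do not involve the input. Hence $w$ again satisfies \eqref{eq:target1}, except that the boundary condition at $x=1$ is perturbed:
\begin{align*}
w(1,t)=\hat U(t)-\int_0^1 k(1,y,t)v(y,t)\,dy=-\int_0^1 \tilde k(1,y,t)v(y,t)\,dy,
\end{align*}
where $\tilde k=k-\hat k$. By \eqref{eq:NO_est1}, taking $n$ small in the Gevrey norm and using the $C^2(\Omega)$ norm (which controls sup norms), we will bound $\sup_t\|\tilde k(\cdot,\cdot,t)\|_\infty\le D\epsilon$. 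We then rewrite $v=\mathcal L[w]$ using \eqref{eq:inverse_backstepping} and \eqref{eq:l_sup_norm}. Together these give
\begin{align*}
|w(1,t)|\le D\epsilon(1+C)\|w(\cdot,t)\|_2=:\epsilon M\|w(\cdot,t)\|_2 .
\end{align*}

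Since the boundary term is nonzero, the Lyapunov analysis needs care. We plan to remove the inhomogeneous Dirichlet trace by a lifting: set $z=w-x^2 w(1,t)$ (or another profile compatible with the Robin condition at $0$; for instance $\phi(x)$ with $\phi'(0)=q\phi(0)$ and $\phi(1)=1$). This yields a homogeneous-boundary system forced by terms involving $w(1,t)$ and $\dot w(1,t)$.

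An alternative, which we would try first because it avoids $\dot w(1,t)$, is to work directly with $V=\frac12\|w\|_2^2$. Integration by parts gives
\begin{align*}
\dot V=\theta w(1)w_x(1)-\theta q w(0)^2-\theta\|w_x\|_2^2-c(t)\|w\|_2^2 .
\end{align*}
The trouble term $w(1)w_x(1)$ is not controlled by $H^1$ norms, so instead we use the weighted functional $V=\frac12\int_0^1 (1+x)\,w^2\,dx$, or the $H^1$-type functional $\frac12\|w_x\|^2+\frac{q}{2}w(0)^2$ as in \cite{espitia2019boundary}. For the latter, the boundary term $\theta w_x(1)w_t(1)$ involves $\frac{d}{dt}w(1,t)$. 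That quantity can be expressed through $\tilde k_t$ (bounded by \eqref{eq:NO_est1} with $n=1$) and through $v_t$, which after integrating by parts in $y$ contains $v_x$, $v(1)$ and $\tilde k_{yy}$. Since the Gevrey/$C^2$ bound controls all of these, we expect an estimate of the form $\dot V\le-(\theta\mu+c(t)-\epsilon M')V+\epsilon M'' V$.

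The key step, and the expected main obstacle, is absorbing these $\epsilon$-terms into the good terms $\theta\|w_x\|^2+c(t)\|w\|^2$ using Poincaré and Young inequalities together with the trace inequality $|w(1)|\le\|w\|_2+\|w_x\|_2$. This absorption gives a threshold $\epsilon^*$ determined by $\theta$, $q$, $C$ and $D$, and independent of $v_0$ and of $t$ (because $c(t)>0$ only helps). We then obtain
\begin{align*}
\dot V\le -\big(\beta+c(t)\big)V
\end{align*}
with $\beta>0$ for $\epsilon<\epsilon^*$, so that $V(t)\le V(0)\exp\!\big(-\beta t-\int_0^t c\big)\to0$ as $t\to T$ by \eqref{eq:c(t)}.

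Finally, we transfer the estimate back to $v$ through $\|v\|\le(1+C)\|w\|$, from \eqref{eq:l_sup_norm}. If the $H^1$-type functional was used, the decay $\|v_x\|\to0$ together with $H^1\hookrightarrow C^0$ yields the pointwise limit \eqref{eq:stab}. If only $L^2$ decay is obtained, we would upgrade to pointwise convergence by parabolic smoothing on $[T-\delta,T)$, using the $H^2$ regularity noted after \eqref{eq:Gevrey_norm}.
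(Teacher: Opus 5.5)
You follow the paper exactly up to $|w(1,t)|\le\epsilon M\|w(\cdot,t)\|_2$: exact kernel in \eqref{eq:backstepping}, perturbed trace \eqref{eq:target_NO_3}. The paper then keeps $V=\frac12\|w\|_2^2$ and removes $\theta w(1,t)w_x(1,t)$ with \eqref{eq:trace}, i.e.\ $|w_x(1,t)|^2\le b(\|w_x\|_2^2+\|w\|_2^2)$. It obtains this from an asserted ``parabolic regularity'' bound $\|w_t\|_2\le C_0(\|w_x\|_2+\|w\|_2)$. You are right to distrust this step. A pointwise value of $w_x$ is not controlled by $H^1$ norms: high-frequency profiles violate the bound, and $b$ would in any case inherit $c(t)$ through \eqref{eq:trace_est2}. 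So you cannot close as the paper does. But your substitute is never carried out (the decisive inequality is only ``expected''), and the routes you list do not close as described. The weight $1+x$ does not vanish at $x=1$, so $\frac12\int_0^1(1+x)w^2\,dx$ still produces $2\theta w(1,t)w_x(1,t)$. The lifting produces the forcing $\phi\,\dot w(1,t)$. After the integration by parts in $y$ that you yourself describe, $\dot w(1,t)$ contains $-\theta\tilde k(1,1,t)v_x(1,t)$, which is the same uncontrolled trace at the $L^2$ level.

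The $H^1$ route is where the real gap sits. For $V_1=\frac12\|w_x\|_2^2+\frac q2w(0,t)^2$ one gets
\[
\dot V_1=w_x(1,t)\,\dot w(1,t)+c(t)\,w(1,t)\,w_x(1,t)-\theta\|w_{xx}\|_2^2-c(t)\|w_x\|_2^2-c(t)\,q\,w(0,t)^2 .
\]
The second term is missing from your accounting, and it is exactly where ``$c(t)>0$ only helps'' fails. Use $|w(1,t)|\le\epsilon M\|w\|_2$ and $|w_x(1,t)|\le q|w(0,t)|+\|w_{xx}\|_2$. Then the piece $c\,\epsilon M\|w\|_2\|w_{xx}\|_2$ is absorbed by $\theta\|w_{xx}\|_2^2+c\|w\|_2^2$ only under a condition like $c(t)\epsilon^2M^2\lesssim\theta$. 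A sharper trace inequality only improves this to $c(t)^{1/3}\epsilon^{4/3}\lesssim1$. Either condition fails near $T$, since \eqref{eq:c(t)} forces $c$ to be unbounded. So neither a $t$-independent $\epsilon^*$ nor $\dot V\le-(\beta+c(t))V$ follows.

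There are two repairs.

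First, take $\frac12\|w\|_2^2+\frac{a(t)}{2}\big(\|w_x\|_2^2+q\,w(0,t)^2\big)$ with $a(t)=\kappa/c(t)$. The cross terms $\theta w(1)w_x(1)$ and $a\,c\,w(1)w_x(1)$ then require $\theta\epsilon^2M^2\lesssim a\,c$ and $a\,c\,\epsilon^2M^2\lesssim\theta$. Both hold uniformly in $t$ for $\kappa\sim\theta$ and $\epsilon$ small. This needs $\dot a\le0$ (e.g.\ $c$ nondecreasing) and enough regularity to differentiate $V_1$.

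Second, and more simply (this is the standard device for neural-operator backstepping), apply the transformation with $\hat k$ instead of $k$. Then $\hat U$ gives $w(1,t)=0$ exactly, and $w_x(0,t)=(q+\tilde k(0,0,t))w(0,t)$. The error enters only through three residuals:
\begin{itemize}
\item $\tilde k_t-\theta(\tilde k_{xx}-\tilde k_{yy})+\gamma\tilde k$ in the domain,
\item $\frac{d}{dx}\tilde k(x,x,t)$ on the diagonal,
\item $(\tilde k_y-q\tilde k)(x,0,t)$ at $y=0$.
\end{itemize}
All of these are at most of order $\epsilon(1+c(t))$. The plain functional \eqref{eq:V} then closes, because the $\epsilon\,c(t)$ part is absorbed by $-c(t)\|w\|_2^2$ uniformly in $t$.
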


\begin{proof}
First of all, notice that when imposing $v(1,t)=\hat U(t)$ in \eqref{eq:backstepping}, with $\hat U(t)$ given by \eqref{eq:U_NO}, we obtain the perturbed target system 
\begin{align}
    &w_t(x,t) = \theta w_{xx}(x,t) - c(t)w(x,t)   \label{eq:target_NO_1}
    \\
    &w_x(0,t) = qw(0,t)  
    \\
    &w(1,t) = -\int_0^1 \Big(\hat{k}(1,y,t)-{k}(1,y,t)\Big)v(y,t)\,dy. \label{eq:target_NO_3}
\end{align}
Let us define the following Lyapunov function
\begin{align}\label{eq:V}
    V(t)=\dfrac{1}{2}\int_0^1 w^2(x,t)\,dx,
\end{align}
whose derivative satisfies
\begin{align*}
    \dot{V} = \int_0^1 w(x,t)w_t(x,t)\,dx = \int_0^1 w(x,t)[\theta w_{xx}(x,t)-c(t)w(x,t)]\,dx. 
\end{align*}
By using integration by parts, \eqref{eq:target_NO_3}, and \eqref{eq:NO_est1}, we get that
\begin{align*} 
    \dot{V} =& \, \theta \int_0^1 w(x,t)dw_x -c(t)\int_0^1w^2(x,t)\,dx
    \\
    =&\,\theta w(1,t)w_x(1,t)-\theta qw^2(0,t)-\theta\int_0^1 w_x^2(x,t)\,dx -c(t)\int_0^1w^2(x,t)\,dx
    \\
    \leq &\, \theta \epsilon \left(\int_0^1 v(y,t)\,dy\right) w_x(1,t)-\theta q w^2(0,t) -\theta \int_0^1w_x^2(x,t)\,dx  - c(t)\int_0^1w^2(x,t)\,dx.
\end{align*}
Using Young's inequality, for any $\delta>0$ we can estimate
\begin{align}\label{eq:V_2}
    \dot{V} \leq \theta \Big( \dfrac{\epsilon^2}{2\delta}\|v(\cdot,t)\|^2 +\dfrac{\delta}{2}w_x^2(1,t)\Big)-\theta\int_0^1w_x^2(x,t)\,dx - c(t)\int_{0}^{1}w^2(x,t)\,dx.
\end{align}


Now, using \eqref{eq:backstepping}, \eqref{eq:k_sup_norm}, \eqref{eq:inverse_backstepping}, \eqref{eq:l_sup_norm}, and the Cauchy-Schwarz inequality, we have that there exists a positive constant $C>0$ such that
\begin{align}\label{eq:bound_vw}
    \|v(\cdot,t)\|_2^2 \leq C\|w(\cdot,t)\|_2^2 
\end{align}
Replacing \eqref{eq:bound_vw} into \eqref{eq:V_2} we get 
\begin{align}\label{eq:V_3}
    \dot{V} \leq \theta \left(\dfrac{\epsilon^2C}{2\delta}\|w(\cdot,t)\|^2 +\dfrac{\delta}{2}w_x^2(1,t)\right)-\theta\int_0^1w_x^2(x,t)\,dx - c(t)\int_{0}^{1}w^2(x,t)\,dx
\end{align}
Moreover, since we are considering strong solutions $w$, we have
\begin{align*}
    w(\cdot,t)\in H^2(0,1)\quad \text{for all } t\in[0,T),
\end{align*}
and therefore $w_x(\cdot,t)\in H^1(0,1)$. The one–dimensional trace theorem yields the existence of a constant $C_{\mathrm{tr}}>0$, independent of $t$, such that
\begin{align}\label{eq:trace_est}
    |w_x(1,t)|^2 \leq C_{\mathrm{tr}} \Big(\|w_x(\cdot,t)\|^2_2 + \|w_{xx}(\cdot,t)\|^2_2\Big).
\end{align}
But since $w$ satisfies \eqref{eq:target_U_NO_1}, we obtain
\begin{align}\label{eq:trace_est2}
    \|w_{xx}(\cdot,t)\|_2 \leq \frac{1}{\theta}\Big(\|w_t(\cdot,t)\|_2 + |c(t)|\|w(\cdot,t)\|_2\Big).
\end{align}
Besides, by parabolic regularity, there exists a constant $C_0>0$ such that
\begin{align*}
    \|w_t(\cdot,t)\|_2 \leq C_0\Big(\|w_x(\cdot,t)\|_2 + \|w(\cdot,t)\|_2\Big).
\end{align*}
Combining this estimate with \eqref{eq:trace_est} and \eqref{eq:trace_est2}, we deduce the existence of a constant $b>0$ such that
\begin{equation}\label{eq:trace}
    |w_x(1,t)|^2 \leq b \Big(\|w_x(\cdot,t)\|_2^2 + \|w(\cdot,t)\|_2^2 \Big).
\end{equation}
Substituting \eqref{eq:trace} into \eqref{eq:V_3}, we then get 
\begin{align*}
    \dot{V} \leq \left(\dfrac{\theta\epsilon^2C}{2\delta}+\dfrac{\theta b  \delta}{2}-c(t)\right)\|w(\cdot,t)\|_2^2 +\left(\dfrac{\theta b \delta}{2}-\theta\right)\|w_x(\cdot,t)\|_2^2
\end{align*}
Choose now $\delta\leq 2/b$, so that
\begin{align*}
    \frac{\theta \delta b}{2}-\theta \leq 0,
\end{align*}
and define 
\begin{align*}
    \epsilon^* = \sup_{t \in [0,T)} \sqrt{\dfrac{2(c(t)-\theta)}{C\theta}}
\end{align*}
Then, for all $\epsilon \in (0,\epsilon^*)$, we obtain that 
\begin{align*}
    \dot{V}(t) \leq &-m_1 V(t)<0
\end{align*}
with 
\begin{align*}
    m_1=2\left(c(t)-\frac{\theta\epsilon^2C}{2}-\theta\right).
\end{align*}
This leads to 
\begin{align*}
    V(t) \leq V(0)\exp\left[-2\left(c(t)-\frac{\theta\epsilon^2C}{2}-\theta\right)t\right]
\end{align*}
and
\begin{align*}
    V(w(\cdot,t)) \leq V(w_0)\exp\left[-2\int_0^tc(\tau)d \tau + (\theta \epsilon^2C+2\theta)t\right].
\end{align*}
Let now  
\begin{align}\label{eq:zeta(t)}
    \displaystyle \zeta(t) = \exp\left(-2\int_{0}^{t}c(\tau)d \tau\right)
\end{align}

Then, from \eqref{eq:c(t)} it follows that $\zeta(t)$ is a monotonically decreasing function such that $\zeta(0)=1$ and $\zeta(t)\to 0$ as $t\to T$. Thus, for all $t \in (0,T)$
\begin{align*}
    V(w(\cdot,t)) \leq V(w_0)\zeta(t)\exp\Big((\theta \epsilon^2C+2\theta)t\Big) .
\end{align*}
Hence, 
\begin{equation}\label{eq:w_1}
    \|w(\cdot,t)\|_2\leq\sqrt{\zeta(t)}\exp\left(\frac{(\theta \epsilon^2C_1+2\theta)}{2}t\right) \|w_0\|_2.
\end{equation}
Finally, combining \eqref{eq:bound_vw} and \eqref{eq:w_1}, we have
\begin{align*}
    \|v(\cdot,t)\|_2 \leq C \sqrt{\zeta(t)} \exp\left(\frac{(\theta \epsilon^2C+2\theta)}{2}t\right)\|v_0\|_2,
\end{align*}
which immediately yields \eqref{eq:stab} thanks to the properties of $\zeta$. 
\end{proof}

\section{Approximating the full feedback law $(\lambda, v)\mapsto U$}\label{sec:NO-approximated backstepping control law} 

In Section \ref{sec:NO}, we have shown how using DeepONet to approximate the kernel $k(x,y,t)$ preserve the backstepping structure and allows to stabilize the PDE system \eqref{eq:heat_1}. However, the control law \eqref{eq:U} still requires integration of this kernel, which could still generate a computational bottleneck. 

On the other hand, learning the full feedback law $(\lambda, v)\mapsto U$ would allow the control to be generated directly from the current state and system parameters, thereby avoiding repeated kernel evaluations and reducing the online computational burden. In addition, this approach may offer increased robustness with respect to state measurement noise and numerical approximation errors, as the operator is trained end-to-end on closed-loop input–output data.  

In this section, we analyze this proposed approach ans show that, although it does not allow to get a full stability of \eqref{eq:heat_1} in the sense of \eqref{eq:stab}, it still yields a \textit{practical stability} stability result and steer the PDE state to an neighborhood of the equilibrium at time $T$. As we shall see, this weaker form of stability arises because we are training together a multiplicative gain $\mathcal{K}(\lambda)$ and a feedback for $v$, generative an extra additive term in the approximation error. 

We begin by proving that the map $(\lambda, v)\mapsto U$ is Lipschitz continuous.

\begin{lemma}
The map 
\begin{align*}
    &\mathcal{G}: G_\alpha(0,T;C^0(0,1))\times L^2((0,1)\times(0,T)) \to C^\infty(0,T)
    \\
    &\mathcal{G}(\lambda,v) = U, 
\end{align*}
with $U$ given by \eqref{eq:U}, is Lipschitz continuous, that is, there exists a positive constant $C>0$ such that, for all 
\begin{align*}
    (\lambda_1,v_1), (\lambda_2,v_2)\in G_\alpha(0,T;C^0(0,1))\times L^2((0,1)\times(0,T)),
\end{align*}
\begin{align*}
    \|U_1-U_2\|_{C^\infty} = \|\mathcal{G}(\lambda_1,v_1)-\mathcal{G}(\lambda_2,v_2)\|_{C^\infty} \leq C\Big(\|\lambda_1-\lambda_2\|_{G_\alpha} + \|v_1-v_2\|_{2,T}\Big).
\end{align*}
\end{lemma}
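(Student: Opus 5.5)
The plan is to split the difference $U_1-U_2$ into a part caused by the change in the kernel and a part caused by the change in the state. Write $k_i=\mathcal{K}(\lambda_i)$ for the kernel solving \eqref{eq:kernel_1} with $\gamma_i=\lambda_i+c$. Then
\begin{align*}
U_1(t)-U_2(t)=\int_0^1\big(k_1(1,y,t)-k_2(1,y,t)\big)v_1(y,t)\,dy+\int_0^1 k_2(1,y,t)\big(v_1(y,t)-v_2(y,t)\big)\,dy .
\end{align*}
For the second term I would use the uniform bound \eqref{eq:k_sup_norm} from Lemma \ref{Bound} together with Cauchy--Schwarz in $y$. For the first term I would use Cauchy--Schwarz together with a Lipschitz estimate for the solution map $\lambda\mapsto k$. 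Since the $C^\infty$ norm involves time derivatives, I would interpret $\|\cdot\|_{C^\infty}$ as the weighted sup-of-derivatives norm analogous to \eqref{eq:Gevrey_norm}. I would then apply $\partial_t^n$ under the integral sign, using the Leibniz rule for the products $k\,v$, and estimate each term in the same way.

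The first key step is the Lipschitz dependence of the kernel on the coefficient, namely
\begin{align*}
\|k_1-k_2\|_{G_\alpha(0,T;C^2(\Omega))}\le C\|\lambda_1-\lambda_2\|_{G_\alpha}.
\end{align*}
The difference $\tilde k=k_1-k_2$ satisfies a linear equation of the form \eqref{eq:kernel_1}. Its equation is $\tilde k_t=\theta(\tilde k_{xx}-\tilde k_{yy})-\gamma_1\tilde k-(\lambda_1-\lambda_2)k_2$, its boundary condition is $\tilde k(x,x,t)=-\frac1{2\theta}\int_0^x(\lambda_1-\lambda_2)\,d\xi$, and the condition $\tilde k_y=q\tilde k$ at $y=0$ is unchanged. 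The source term and the boundary datum are both linear in $\lambda_1-\lambda_2$, and $k_2$ is bounded in $G_\alpha$ by Lemma \ref{Bound}. I would therefore rerun the successive-approximation construction of \cite{meurer2009tracking} on this difference equation. Linearity of the integral-operator formulation then gives a bound proportional to $\|\lambda_1-\lambda_2\|_{G_\alpha}$, provided the $\lambda_i$ range over a bounded set, as in the compact set $A$ of Theorem \ref{th:NO}. The trace at $x=1$ is controlled by the $C^2(\Omega)$ norm.

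The second step is to control the state factors. The factor $v_1$ in the first term must be bounded, which again requires working on a bounded set of states. The factor $v_1-v_2$ in the second term must be related to $\|v_1-v_2\|_{2,T}$. This is where the main obstacle lies. Pointwise-in-time bounds and time derivatives of $U$ are not controlled by an $L^2$-in-time norm of $v$. Instead, they need the regularity of $v$ coming from the smoothing of the heat equation for $t>0$, or else a weakened target norm. My first attempt would be to prove the estimate in the integrated norm $\|U_1-U_2\|_{L^2(0,T)}$, where Cauchy--Schwarz in $(y,t)$ directly yields $\sup|k_2|\,\|v_1-v_2\|_{2,T}$ plus $\sup_t\|k_1-k_2\|_\infty\|v_1\|_{2,T}$. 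I would then upgrade this to derivatives using the Gevrey bounds on the kernels and interior parabolic regularity of $v$. The Lipschitz constant would be allowed to depend on a bound for $\|v_1\|_{2,T}$ and for the $\lambda_i$, which is consistent with the semiglobal practical result this lemma is meant to serve.
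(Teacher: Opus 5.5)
Your decomposition of $U_1-U_2$ is the paper's (which index sits on which factor is immaterial), and your diagnosis of the obstacle is correct. But the step you rely on to reach the stated conclusion, upgrading an $L^2(0,T)$ estimate to the $C^\infty$ norm through Gevrey bounds on the kernels and interior parabolic regularity of $v$, does not work. In fact no argument can deliver the lemma as stated.

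In the lemma, $v$ is an arbitrary element of $L^2((0,1)\times(0,T))$, not a solution of anything. So there is no regularity to invoke, and $U$ is in general only in $L^2(0,T)$. Two counterexamples make this concrete.
\begin{itemize}
\item Take $\lambda_1=\lambda_2$, $v_2=0$ and $v_1=h^{-1/2}\phi(y)\chi_{(t_0,t_0+h)}(t)$, with $\int_0^1 k(1,y,t_0)\phi(y)\,dy\neq 0$. Then $\|v_1-v_2\|_{2,T}=\|\phi\|_2$ for every $h$, while by continuity of $k$ in $t$ the quantity $\sup_t|U_1(t)-U_2(t)|$ grows like $h^{-1/2}$ as $h\to 0$.
\item Take $\lambda_1\neq\lambda_2$ fixed and $v_1=v_2=M\psi(y)$, with $\int_0^1(k_1-k_2)(1,y,t_0)\psi(y)\,dy\neq0$. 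The left-hand side grows linearly in $M$, while the right-hand side stays equal to $C\|\lambda_1-\lambda_2\|_{G_\alpha}$. So no global constant exists; you implicitly concede this by letting $C$ depend on $\|v_1\|_{2,T}$.
\end{itemize}
Restricting to solutions of \eqref{eq:heat_1} does not rescue the upgrade either. Interior parabolic estimates for $\partial_t^n v$ carry factors that blow up as $t\to0$, so they give no bound uniform on $(0,T)$ in terms of $\|v\|_{2,T}$. Moreover, $v_1-v_2$ solves an inhomogeneous problem with different coefficients and boundary inputs.

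The paper's proof does not overcome this. It argues pointwise in $t$, absorbs $\|v_2(\cdot,t)\|_2$ into the constant, and then takes the supremum over $t$. That yields $\sup_t\|v_1(\cdot,t)-v_2(\cdot,t)\|_2$ rather than $\|v_1-v_2\|_{2,T}$, and no time derivative is ever estimated.

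The right repair is to restate the lemma, not to look for a sharper proof. Your linear difference equation for $k_1-k_2$ (source $-(\lambda_1-\lambda_2)k_2$, diagonal datum $-\frac{1}{2\theta}\int_0^x(\lambda_1-\lambda_2)\,d\xi$, with $c$ cancelling) is the right way to justify the kernel bound that the paper merely asserts from ``continuous dependence''. With it, on bounded sets of $\lambda$, your Cauchy--Schwarz computation in $(y,t)$ gives a correct local estimate in matching norms:
\begin{align*}
\|U_1-U_2\|_{L^2(0,T)}\le \sup_{y,t}|k_2(1,y,t)|\,\|v_1-v_2\|_{2,T}+C\,\|v_1\|_{2,T}\,\|\lambda_1-\lambda_2\|_{G_\alpha}.
\end{align*}
The paper's pointwise computation gives the analogous bound with $\sup_t|U_1(t)-U_2(t)|$ on the left and $\sup_t\|v_1(\cdot,t)-v_2(\cdot,t)\|_2$, $\sup_t\|v_2(\cdot,t)\|_2$ on the right. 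Either supplies the continuity on a compact set that the DeepONet approximation theorem requires.

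The proof of Theorem \ref{th:semiglobal} only uses the pointwise bound $|w(1,t)|\le\varepsilon$. So the sup-in-time version, with the state measured in $L^\infty(0,T;L^2(0,1))$, is the one that should replace this lemma and feed Theorem \ref{th:NO_2}. No bounds on time derivatives of $U$ are needed anywhere downstream.
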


\begin{proof}
In what follows, we shall denote by $C>0$ a generic positive constant whose value may change from line to line. 

Let $(\lambda_1,v_1)$, $(\lambda_2,v_2)$ be two sets of admissible functions, and define
\begin{align*}
    U_1(t) = \int_0^1 k_1(1,y,t)v_1(y,t)\,dy, 
    \\
    U_2(t) = \int_0^1 k_2(1,y,t)v_2(y,t)\,dy,
\end{align*}
where $k_1$ and $k_2$ are the kernels corresponding to $\lambda_1$ and $\lambda_2$, respectively. Then, for all $t\in(0,T)$ we have
\begin{align*}
    |U_1(t) - U_2(t)| \leq I_1(t) + I_2(t),
\end{align*}
with 
\begin{align*}
    & I_1(t) := \left|\int_0^1 k_1(1,y,t) \big(v_1(y,t)-v_2(y,t)\big)\,dy \right| 
    \\
    & I_2(t):= \left|\int_0^1 \big(k_1(1,y,t)-k_2(1,y,t)\big) v_2(y,t)\,dy \right|. 
\end{align*}
Using the H\"older inequality and the fact that $k$ is bounded, we obtain
\begin{align*}
    I_1(t) \le \|k_1(1,\cdot,t)\|_\infty \, \|v_1(\cdot,t)-v_2(\cdot,t)\|_1 \le C \|v_1(\cdot,t)-v_2(\cdot,t)\|_2.
\end{align*}
For the second term $I_2$, given the continuous dependence of $k$ on $\lambda$, we have 
\begin{align*}
    \|k_1(1,\cdot,t) - k_2(1,\cdot,t)\|_\infty \leq C \|\lambda_1 - \lambda_2\|_{G_\alpha}.    
\end{align*}
Then, 
\begin{align*}
    I_2(t) \leq \|v_2(\cdot,t)\|_2 \, \|k_1(1,\cdot,t) - k_2(1,\cdot,t)\|_2 \leq C \|\lambda_1 - \lambda_2\|_{G_\alpha}.
\end{align*}
Combining the two terms, we therefore obtain that for all $t\in(0,T)$
\begin{align*}
    |U_1(t) - U_2(t)| \leq C\Big(\|v_1(\cdot,t) - v_2(\cdot_t)\|_2 + \|\lambda_1 - \lambda_2\|_{G_\alpha}\Big),
\end{align*}
and the proof follows taking the supremum over $t\in(0,T)$.
\end{proof}

Due to the Lipschitz continuity of $\mathcal{G}$, based on the DeepONet approximation \cite[Theorem 2.1]{Approximation}, we get the following result analogous to Theorem \ref{th:NO}. 
\begin{theorem}\label{th:NO_2}
Fix a compact set 
\begin{align*}
    A\subset G_\alpha(0,T;C^0(0,1))\times L^2((0,1)\times(0,T)).    
\end{align*}
For all $\epsilon> 0$, there exists a NO $\mathcal{\hat{G}}: A\to C^\infty(0,T)$ 
\begin{align}\label{eq:NO_est2}
    \|\mathcal{G}(\lambda,v)-\hat{\mathcal{G}}(\lambda,v)\|_{C^\infty}\leq\epsilon.
\end{align}
\end{theorem}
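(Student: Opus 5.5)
The plan is to mirror the proof of Theorem \ref{th:NO} and invoke the universal approximation theorem for DeepONet, \cite[Theorem 2.1]{Approximation}. That theorem needs two inputs: a compact input set in a Banach space, and a continuous nonlinear operator on that set with values in a Banach space of functions. The preceding Lemma supplies the continuity of $\mathcal{G}$. It is Lipschitz, hence uniformly continuous, on $G_\alpha(0,T;C^0(0,1))\times L^2((0,1)\times(0,T))$ with the product norm $\|\lambda\|_{G_\alpha}+\|v\|_{2,T}$, and therefore continuous on the compact set $A$.

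\textbf{Step 1.} Equip the product space with the sum norm. Check that $A$, being compact there, has compact projections onto each factor. This allows the approximation theorem to be applied to operators defined on (products of) compact sets of functions.

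\textbf{Step 2.} Observe that $\mathcal{G}(A)$ is compact in the target space, as the continuous image of a compact set. Then apply \cite[Theorem 2.1]{Approximation}, which yields branch and trunk networks. The branch net takes finitely many sensor values of $(\lambda,v)$ and the trunk net takes the time variable $t$. Together they define $\hat{\mathcal{G}}$ with $\sup_{(\lambda,v)\in A}\|\mathcal{G}(\lambda,v)-\hat{\mathcal{G}}(\lambda,v)\|\leq\epsilon$. For the sensor discretization I would use the standard argument that the compactness of $A$ gives uniform equicontinuity, so finitely many point evaluations, or projections onto a finite basis in the $L^2$ component, determine the input up to an arbitrarily small error. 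Uniform continuity of $\mathcal{G}$ then controls the resulting output error.

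\textbf{Main obstacle.} The hard step is the norm on the target. The Lemma only controls $\sup_t|U_1(t)-U_2(t)|$, which is really a $C^0$ estimate even though it is written as $\|\cdot\|_{C^\infty}$. DeepONet approximation results are likewise stated in the sup norm over the output domain. I would therefore interpret $\|\cdot\|_{C^\infty}$ in \eqref{eq:NO_est2} as the uniform norm on $[0,T]$, which is what the proof of the Lemma actually delivers. Two further points need attention under this reading. First, uniform approximation needs a compact output domain, so I would extend to the closed interval $[0,T]$ using the boundedness of $k$ from Lemma \ref{Bound}. Second, the $L^2$-in-time dependence on $v$ must be compatible with pointwise-in-$t$ evaluation. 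I would handle this by taking $A$ inside the set of trajectories that are continuous in time with values in $L^2(0,1)$, for example closed-loop solutions, which lie in $C(0,T;L^2(0,1))$. On that set the Lemma's pointwise bound holds and continuity in the sup norm follows.

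If derivative control is genuinely needed, I would fall back on a Sobolev-type DeepONet approximation result. That would be applied to $\mathcal{G}$ viewed as a map into $C^m([0,T])$ for a fixed finite $m$, using the Gevrey regularity of $k$ from Lemma \ref{Bound}.
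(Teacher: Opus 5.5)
Your route is the paper's route. The paper proves Theorem~\ref{th:NO_2} in one line, combining the Lipschitz continuity of $\mathcal{G}$ from the preceding Lemma with \cite[Theorem 2.1]{Approximation}, exactly as for Theorem~\ref{th:NO}. Reading $\|\cdot\|_{C^\infty}$ as the uniform norm in $t$ is not a weakening relative to the paper; it is what the paper's argument actually yields. The cited theorem is a sup-norm result on a compact output domain, and the Lemma's proof only gives
\begin{align*}
    \sup_{t}|U_1(t)-U_2(t)|\leq C\Big(\sup_{t}\|v_1(\cdot,t)-v_2(\cdot,t)\|_2+\|\lambda_1-\lambda_2\|_{G_\alpha}\Big),
\end{align*}
with $C$ depending on $\sup_t\|v_2(\cdot,t)\|_2$. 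No time derivatives are controlled anywhere.

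Where your repair falls short is the $v$-component. It is not enough that $A$ lie \emph{inside} $C([0,T];L^2(0,1))$. $A$ must be compact for the norm $\sup_t\|v(\cdot,t)\|_2$; otherwise $\mathcal{G}$ need not be continuous on $A$ in the uniform norm, and the approximation theorem cannot be invoked. To see this, fix $\lambda$ and $t_0\in(0,T)$ with $k(1,\cdot,t_0)\not\equiv0$ (e.g.\ any $\lambda\geq0$, for which $k(1,1,t_0)<0$). Set $v_n(x,t)=\phi_n(t)\,k(1,x,t_0)$, where $\phi_n$ are smooth, $0\leq\phi_n\leq1$, $\phi_n(t_0)=1$, and their supports shrink to $t_0$. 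Then $v_n\to0$ in $L^2((0,1)\times(0,T))$, so $\{(\lambda,v_n)\}_n\cup\{(\lambda,0)\}$ is compact in the statement's topology and consists of continuous-in-time trajectories. Yet $\mathcal{G}(\lambda,v_n)(t_0)=\|k(1,\cdot,t_0)\|_2^2$ for all $n$, while $\mathcal{G}(\lambda,0)=0$. The same sequence shows that the Lemma is false as stated with $\|v_1-v_2\|_{2,T}$, so the paper's one-line proof has this hole as well. Taking $A$ compact in $G_\alpha(0,T;C^0(0,1))\times C([0,T];L^2(0,1))$ closes it, and your finite-rank encoding of the $L^2$ input can then be carried out in that norm.

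Finally, extending $U$ to $t=T$ requires a limit of $k(1,\cdot,t)$ as $t\to T$, not mere boundedness. Here Lemma~\ref{Bound} cannot be taken at face value. Condition \eqref{eq:c(t)} forces $\sup_{(0,T)}c=+\infty$, since a bounded $c$ would give $\int_0^tc\leq T\sup c$. With $\lambda$ bounded, the diagonal condition in \eqref{eq:kernel_1} then makes $k(1,1,t)=-\frac{1}{2\theta}\big(\int_0^1\lambda(\xi,t)\,d\xi+c(t)\big)$ unbounded. A rigorous version must either approximate on $[0,T']$ with $T'<T$, or restrict $A$ to trajectories decaying fast enough that $U$ stays continuous up to $T$.
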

With $\hat{\mathcal{G}}$ given by Theorem \ref{th:NO_2}, we define the NO control
\begin{equation}\label{eq:U_NO_hat}
	\hat{U} = \hat{\mathcal{G}}(\lambda,v),
\end{equation}  
which yields the perturbed target system  
\begin{align}
	w_t(x,t) &= \theta w_{xx}(x,t) - c(t)w(x,t)   \label{eq:target_U_NO_1}
	\\
	w_x(0,t) &= qw(0,t)  \label{eq:target_U_NO_2}
	\\
	w(1,t) &= \hat{\mathcal{G}}(\lambda,v)
	-{\mathcal{G}(\lambda,v)}\label{eq:target_U_NO_3}
\end{align}

We can then prove the following result of semiglobal practical prescribed-time stability.

\begin{theorem}\label{th:semiglobal}
Consider the closed-loop system \eqref{eq:heat_1} under the NO-approximated backstepping control law $\hat{U} = \hat{\mathcal{G}}(\lambda,v)$ given by \eqref{eq:U_NO_hat}. Let $\varepsilon>0$ be given by Theorem \ref{th:NO_2}. Then, for any initial datum $v_0\in L^2(0,1)$, there exists a constant $C>0$ such that, for all $T>0$
\begin{align}\label{eq:v_estimate_2-1}
    \lim_{t\to T} \|v(\cdot,t)\| \leq C\sqrt{T}\epsilon, \quad\text{ a.e. on } (0,1).
\end{align}
\end{theorem}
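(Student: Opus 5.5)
The plan is to work with the perturbed target system \eqref{eq:target_U_NO_1}--\eqref{eq:target_U_NO_3}. Its boundary perturbation $d(t):=w(1,t)=\hat{\mathcal{G}}(\lambda,v)(t)-\mathcal{G}(\lambda,v)(t)$ is, by \eqref{eq:NO_est2}, bounded uniformly in $t$ by $|d(t)|\le\epsilon$. Unlike in Theorem \ref{theorem1}, this perturbation is not proportional to the state. This is why only a practical bound can be expected.

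The first step is to move the inhomogeneous boundary datum into the interior by a lifting. I would set
\begin{align*}
    \tilde w(x,t)=w(x,t)-\phi(x)d(t),
\end{align*}
where $\phi$ is a fixed smooth profile with $\phi(1)=1$ and $\phi'(0)=q\phi(0)$; for instance $\phi(x)=(1+qx)/(1+q)$. Then $\tilde w$ satisfies the homogeneous boundary conditions of \eqref{eq:target1} and the forced equation
\begin{align*}
    \tilde w_t=\theta\tilde w_{xx}-c(t)\tilde w+f,\qquad f=\theta\phi''d-c\phi d-\phi\dot d.
\end{align*}
Since $\phi''=0$ for this choice, and $|\dot d|\le\epsilon$ by the $C^\infty$ norm in \eqref{eq:NO_est2}, we get $|f|\le(1+c(t))C\epsilon$.

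With $V=\frac12\|\tilde w\|_2^2$, integration by parts gives
\begin{align*}
    \dot V\le-\theta q\tilde w(0,t)^2-\theta\|\tilde w_x\|_2^2-c(t)\|\tilde w\|_2^2+\|f\|_2\|\tilde w\|_2 .
\end{align*}
I would apply Young's inequality to the last term so that part of $-c\|\tilde w\|^2$ absorbs it. This should leave
\begin{align*}
    \dot V\le-c(t)V+C\epsilon^2\big(1+c(t)\big).
\end{align*}
Gronwall's inequality with the integrating factor $\zeta$ from \eqref{eq:zeta(t)} (suitably rescaled) then yields
\begin{align*}
    V(t)\le\zeta(t)^{1/2}V(0)+C\epsilon^2\int_0^t\Big(\frac{\zeta(t)}{\zeta(s)}\Big)^{1/2}\big(1+c(s)\big)\,ds .
\end{align*}
The first term vanishes as $t\to T$ by \eqref{eq:c(t)}. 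The $1$ part of the integral is bounded by $T$, since the ratio $\zeta(t)/\zeta(s)$ is at most $1$. This gives the factor $\sqrt T\epsilon$ after taking square roots.

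The main obstacle is the $c(s)$ part of the integral, which blows up near $T$. I would handle it by the exact integration
\begin{align*}
    \int_0^t c(s)e^{-\int_s^t c}\,ds=1-e^{-\int_0^t c}\le1,
\end{align*}
which contributes only a constant times $\epsilon^2$. This constant can be absorbed into $C\sqrt T\epsilon$ for $T$ bounded below, or reported as an extra $\epsilon$ term; this matches the semiglobal qualifier in the statement. The argument also needs a bound on $|\dot d|$, which is the point to check. If the $C^\infty$ norm in \eqref{eq:NO_est2} is read as controlling only the sup norm, I would instead use the standard mild solution representation of the boundary-forced heat equation to avoid differentiating $d$.

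Finally, $w=\tilde w+\phi d$ gives $\|w(\cdot,t)\|_2\le\|\tilde w(\cdot,t)\|_2+C\epsilon$. Applying the inverse transformation \eqref{eq:inverse_backstepping} with the bound \eqref{eq:l_sup_norm} gives $\|v\|_2\le C\|w\|_2$. Taking $\limsup_{t\to T}$ then produces the estimate \eqref{eq:v_estimate_2-1}.
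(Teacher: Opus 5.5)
Your argument works, modulo the points below, but it takes a genuinely different route from the paper.

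\textbf{The paper's route.} It never homogenizes the boundary condition. It differentiates $V=\frac12\|w\|_2^2$ directly and keeps the boundary term $\theta w(1,t)w_x(1,t)$. It uses only $|w(1,t)|\le\epsilon$ together with Young's inequality. It then controls $|w_x(1,t)|^2$ through the trace-type bound \eqref{eq:trace} imported from the proof of Theorem \ref{theorem1}. The residual forcing is the constant $\theta\epsilon^2/(2\delta)$, independent of $c(t)$, which is why the paper ends with $\epsilon\sqrt{\theta T/(2\delta)}$ and no additive offset.

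\textbf{What your lifting buys.} The lifting $\tilde w=w-\phi d$ gives homogeneous boundary conditions, so you never need a pointwise trace of $w_x$ at $x=1$. That is a real gain, because \eqref{eq:trace} is the weakest link in the paper's argument. Indeed, $|w_x(1,t)|$ is not controlled by $\|w(\cdot,t)\|_{H^1}$, and the asserted bound $\|w_t\|_2\le C_0(\|w_x\|_2+\|w\|_2)$ fails for high-frequency states.

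\textbf{What it costs.} You pay in two places.
\begin{itemize}
\item[1.] You need $|\dot d|\le C\epsilon$. This is what \eqref{eq:NO_est2} gives if its $C^\infty$ norm is read literally. However, the Lipschitz lemma behind Theorem \ref{th:NO_2} only proves a sup-in-time bound, and the paper's route needs nothing beyond $|d|\le\epsilon$. If you want to avoid $\dot d$, the comparison principle is a more concrete fallback than your unspecified mild-solution formula. Since $q>0$ and $c>0$, the part $z$ of $w$ driven by $d$ with zero initial data stays between $\pm\epsilon$; test the equation with $(z-\epsilon)_+$ to see this. The remaining part has homogeneous data and decays like $e^{-\int_0^t c}$. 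Together these give $\limsup_{t\to T}\|w(\cdot,t)\|_2\le\epsilon$ from the sup bound alone.
\item[2.] The forcing $-c\phi d$ and the offset $\phi d$ leave you with $C(1+\sqrt T)\epsilon$ rather than $C\sqrt T\epsilon$. For a fixed horizon this is equivalent after enlarging $C$, and that is the paper's actual setting, since the constants of Lemma \ref{Bound} and the operator $\hat{\mathcal G}$ already depend on $T$. But it does not give the $T$-uniform constant as $T\to0$ that the statement literally asserts. ``Semiglobal'' does not refer to this, so it cannot be used to justify the discrepancy.
\end{itemize}

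\textbf{A minor technical fix.} Absorbing the $\phi\dot d$ part of $f$ into $-c\|\tilde w\|_2^2$ requires $c$ to be bounded below, which \eqref{eq:c(t)} does not guarantee. Instead, use $\tilde w(1,t)=0$ and Poincar\'e to absorb that part into $-\theta\|\tilde w_x\|_2^2$. Your exact integration $\int_0^t c(s)e^{-\int_s^t c}\,ds\le 1$ is then exactly what handles the $c$-weighted forcing, which the paper's route never generates.
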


\begin{proof}
Define the Lyapunov functional
\begin{equation}\label{eq:V_U_Lya}
    V(t) = \frac{1}{2} \int_0^1 w^2(x,t) \, dx.
\end{equation}
Differentiating along solutions of \eqref{eq:target_U_NO_1}-\eqref{eq:target_U_NO_3} gives 
\begin{align*}
    \dot{V}(t) = \int_0^1 w(x,t) w_t(x,t) \, dx = \int_0^1 w(x,t) \big[ \theta w_{xx}(x,t) - c(t) w(x,t) \big] \, dx.
\end{align*}
Integrating by parts, using the boundary conditions \eqref{eq:target_U_NO_2} and \eqref{eq:target_U_NO_3}, and thanks to \eqref{eq:NO_est2}, we obtain
\begin{align}\label{eq:V_U_5}
    \dot{V} =&\, \theta \bigg[ w(1,t) w_x(1,t) - q w^2(0,t) - \int_0^1 w_x^2(x,t) \, dx \bigg] - c \int_0^1 w^2(x,t) \, dx \notag
    \\
    =&\, \theta\Big(\hat{\mathcal{G}}(\lambda,v)(t)
	-{\mathcal{G}(\lambda,v)}(t)\Big) w_x(1,t) - \theta q w^2(0,t) - \theta \|w_x(\cdot,t)\|_2^2  - c \|w(\cdot,t)\|_2^2 \notag
    \\
    \leq&\, \theta\Big(\varepsilon w_x(1,t) - \|w_x(\cdot,t)\|_2^2\Big)  - c \|w(\cdot,t)\|_2^2.
\end{align}
Using Young's inequality, for any $\delta > 0$ and $t\in(0,T)$ we have
\begin{align}\label{eq:V_U_6}
    \theta\varepsilon w_x(1,t) \leq \frac{\theta\varepsilon^2}{2\delta} + \frac{\theta \delta}{2} |w_x(1,t)|^2.
\end{align}

Moreover, following the same procedure as in the proof of Theorem \ref{theorem1}, we can deduce the existence of a constant $b>0$ such that
\begin{align}\label{eq:V_U_7}
    |w_x(1,t)|^2 \leq b \Big(\|w_x(\cdot,t)\|_2 + \|w(\cdot,t)\|_2\Big)
\end{align}
Substituting \eqref{eq:V_U_7} into \eqref{eq:V_U_6}, together with since $\tilde{\mathcal{G}}<\varepsilon$, yields
\begin{align}\label{eq:V_U_8}
    \theta\varepsilon^2 w_x(1,t) \leq \frac{\theta\epsilon^2}{2\delta} + \frac{\theta \delta b}{2} \Big(\|w_x(\cdot,t)\|_2 + \|w(\cdot,t)\|_2\Big).
\end{align}
Plugging \eqref{eq:V_U_8} into \eqref{eq:V_U_5} we then have
\begin{align}\label{eq:V_U_9}
    \dot{V} \leq &\, \frac{\theta\epsilon^2}{2\delta} + \frac{\theta \delta b}{2} \Big(\|w_x(\cdot,t)\|_2 + \|w(\cdot,t)\|_2\Big) - \theta \|w_x(\cdot,t)\|_2^2 - c \|w(\cdot,t)\|_2^2 \notag
    \\
    \leq &\, \frac{\theta\epsilon^2}{2\delta} +\left(\dfrac{\theta \delta b}{2}-\theta\right)\|w_x(\cdot,t)\|^2_2 +\left(\dfrac{\theta \delta b}{2}-c(t)\right) \|w(\cdot,t)\|^2_2. 
\end{align}
Take $\delta\leq 2/b$, so that
\begin{equation}\label{eq:V_U_10}
    \frac{\theta \delta b}{2}-\theta \leq 0.
\end{equation}
Moreover, set
\begin{align*}
    \kappa(t) := c(t)-\frac{\theta \delta b}{2}.
\end{align*}
Since $\|w(\cdot,t)\|^2_2 = 2V$, we obtain from \eqref{eq:V_U_9} the differential inequality
\begin{align}\label{eq:V_U_12}
    \dot{V} \leq -2\kappa(t) V + \frac{\theta}{2\delta} \epsilon^2.
\end{align}
Multiplying \eqref{eq:V_U_12} by 
\begin{align*}
    \mu(t) = \exp\left(\int_0^t 2 \kappa(t)\,d\tau\right)
\end{align*} 
and integrating from $0$ to $t$ gives
\begin{align}\label{eq:V_U_13}
    V(t) \leq \, \exp\left(-\int_0^t 2(c(\tau))\,d\tau+\frac{\theta b \delta}{2}t\right)V(0) + \dfrac{\theta \epsilon^2}{2\delta} \int_0^t\exp\left(-\int_\tau^t 2 \kappa(s)ds\right)\,d\tau.
\end{align}
Set 
\begin{align*}
    \zeta(t)= \exp\left(-\int_0^t c(\tau)\,d\tau\right). 
\end{align*} 

From \eqref{eq:c(t)}, it follows that $\zeta(t)$ is a monotonically decreasing function such that $\zeta(0)=1$ and $\zeta(T)=0$. Thus, for all $t \in (0,T)$
\begin{align*}
    V(w(\cdot,t)) \leq V(0)\zeta(t) \exp\left(\frac{\theta b \delta}{2}t\right) + \dfrac{\theta \epsilon^2}{2\delta} \int_0^t\exp\left(-\int_\tau^t 2 \kappa(s)ds\right)\,d\tau.
\end{align*}
Then, 
\begin{align*}
    \|w(\cdot,t)\|_2\leq &\, \sqrt{\zeta(t)}\exp\left(\frac{\theta b \delta}{2}t\right) \|w_0\|_2 +\epsilon \sqrt{\dfrac{\theta}{2\delta}}  \left[\int_0^t\exp\left(-\int_\tau^t 2 \kappa(s)ds\right)\,d\tau\right]^{\frac{1}{2}}
    \\
    \leq &\, \sqrt{\zeta(t)}\exp\left(\frac{\theta b \delta}{2}t\right) \|w_0\|_2 +\epsilon \sqrt{\dfrac{\theta t}{2\delta}},
\end{align*}
and, since $\zeta(T)\to 0$ as $t\to T$, we get that 
\begin{align*}
    \|w(\cdot,t)\|_2 \to \epsilon \sqrt{\dfrac{\theta T}{2\delta}}, \quad\text{ as } t\to T. 
\end{align*}
From this inequality and \eqref{eq:bound_vw}, we finally obtain that there exists a constant $C>0$ such that
\begin{align}\label{eq:v_estimate_2-1}
    \lim_{t\to T}\|v(\cdot,t)\| \leq C\sqrt{T}\epsilon, \quad\text{ a.e. on } (0,1).
\end{align}
This completes the proof. 
\end{proof}

\begin{remark}
The bound in \eqref{eq:v_estimate_2-1} depends explicitly on both the time horizon $T$ and the NO approximation error $\varepsilon$, and is proportional to $\sqrt{T}\,\varepsilon$. In particular, for fixed $\varepsilon$, the estimate deteriorates as $T$ increases. This reflects the accumulation of approximation effects over the prescribed time interval. However, $\varepsilon$ is a design parameter corresponding to the approximation accuracy of the NO. Therefore, for any fixed time horizon $T$, the residual bound can be made arbitrarily small by selecting a sufficiently accurate neural approximation. In this sense, the degradation induced by larger $T$ can be compensated by reducing $\varepsilon$, at the cost of increasing the size of the training dataset or the neural network complexity. This trade-off justifies the characterization of the closed-loop behavior as \textbf{practical stability}.
\end{remark}

\section{Simulations}\label{sec:simulations}

This section will present and analyze the performance of the proposed NO-based controllers for the considered PDE models: (i) the approximation of the gain kernel map $\lambda \mapsto k(\lambda)$ using DeepONet, which enables a substantial acceleration in computing the time-varying backstepping kernels; and (ii) the approximation of the full feedback law $(\lambda, v)\mapsto U$, for which we evaluate the resulting practical closed-loop stabilization behavior. 

Through a series of numerical experiments, we illustrate how the NO-based approach simultaneously improves computational efficiency and preserves the desired stabilization properties.

\subsection{Simulations’ configuration, dataset generation and NO training} The plant coefficient $\lambda(x,t)$ is defined as the polynomial
\begin{equation}\label{eq:lambda_1}
	\lambda(x,t)
	= 5 + \cos \bigl(\sigma\, \arccos(x)\bigr)
	+ \dfrac{T}{(T-t)^2}.
\end{equation}
with a shape parameter $\sigma$ and 
\begin{align*}
    c(t) =  \frac{2T}{(T-t)^2}.    
\end{align*}

To generate the dataset over which we trained DeepONet, we randomly sampled 1000 values for $\sigma$ with uniform distribution $U(2, 4)$. For each value of $\sigma$, we solved numerically the time-varying kernel PDE on a uniform spatial grid over the triangular domain $0 \leq y \leq x \leq L$, with parameters $dt = 6.25\times10^{-4}\,\mathrm{s}$, $dx = 0.05\,\mathrm{m}$, time $T = 8\,\mathrm{s}$, and domain length $L = 1\,\mathrm{m}$. For this, we employed an Implicit-Explicit: the dissipative term $\theta\,k_{yy}$ is discretized implicitly along the $y$-direction, which allows for stable integration of the stiff diffusive component without restrictive time-step constraints, while, the second spatial derivative $\theta\,k_{xx}$ and the reaction term $-(\lambda + c) k$ are discretized explicitly, facilitating efficient computation. The chosen time and space steps satisfy the Courant--Friedrichs--Lewy (CFL) condition, guaranteeing numerical stability of the explicit components. 

The result is a dataset of 1000 input-output pairs $(\lambda, k)$. With this dataset, we trained DeepONet using an Nvidia RTX 4060 Ti GPU. After 600 epochs of training, the training 
loss of the NO reached $9.09 \times 10^{-7}$.

\subsection{Simulations results: exponential stabilization with NO-approximated kernel $\lambda \mapsto k$}

With the trained DeepONet model, we have computed new backstepping kernels (not included in the training dataset) and we have compared with their analytical counterpart. Table \ref{tab:nopspeedups} displays the computational times of the two approaches, and highlights the evident computational advantage of using DeepONet: as the spatial resolution increases, the NO maintains a nearly constant runtime, achieving speedups of up to several thousand times with respect to the traditional Implicit-Explicit scheme. 
\begin{table}[h] 
	\centering
	\begin{tabular}{lccc}
		\hline
		\textbf{\begin{tabular}[c]{@{}l@{}}Spatial step \\ Size (dx)\end{tabular}} & \textbf{\begin{tabular}[c]{@{}c@{}}Analytical \\ kernel \\ computational \\time (s)\end{tabular}} & \textbf{\begin{tabular}[c]{@{}c@{}}NO Kernel \\ computational \\time (s)\end{tabular}} & \textbf{Speedup} \\ \hline
		$0.01$  & $3.104$   & $0.023$   & $135$x \\
		$0.005$ & $11.805$  & $0.025$   & $472$x \\
		$0.001$ & $181.5$   & $0.036$   & $5263$x \\ \hline
	\end{tabular} %
	\caption{NO speedups over the analytical kernel calculation for various spatial discretizations.}\label{tab:nopspeedups}
\end{table}

We then tested the employment of the NO-computed backstepping kernels to design a controller \eqref{eq:U_NO} to be applied to the original PDE model \eqref{eq:heat_1}. 

For this, we have set $\sigma=3.3$ and first simulated the heat dynamics \eqref{eq:heat_1} over the time interval $t\in[0,20)$, with initial data
\begin{align*}
	v(x,0) = 10.25  x  (1 - x)
\end{align*}
and in the open-loop regime (that is, with $U(t)=0$). 

As shown in Fig.~\ref{open_loop}, the system is unstable, which illustrates the intrinsic instability of the underlying PDE dynamics.
\begin{figure}[h!]
	\centering
	\includegraphics[width=0.5\linewidth]{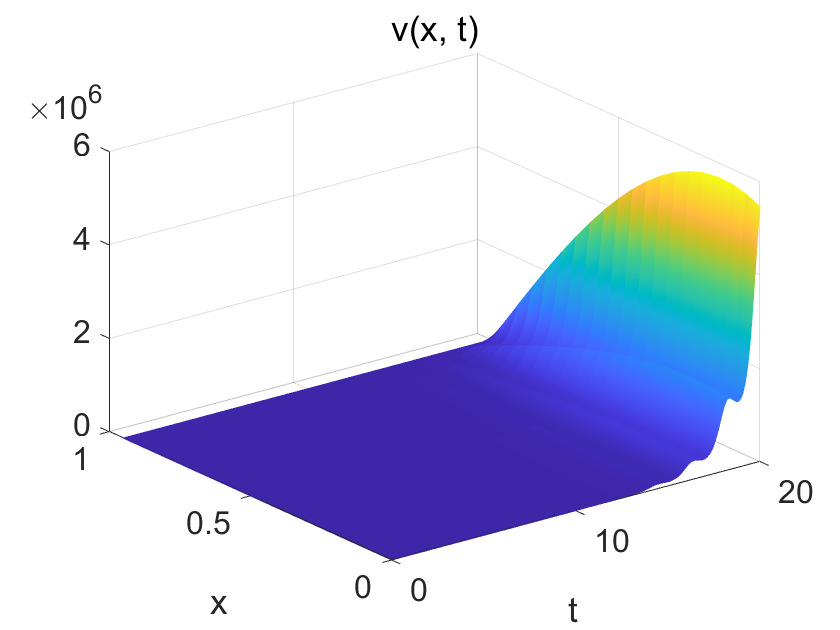}
	\caption{Open-loop evolution of the state $v$}
	\label{open_loop}
\end{figure}

We have then used our NO to compute the backstepping kernel $\hat k$. Fig.~\ref{fig:40-k-imex-m5}–\ref{fig:40-k-imex-m2} show the evolution of these computed kernels at $t=5s$ and $t=7s$, in comparison with the original one solutions of \eqref{eq:kernel_1}. We can observe there that, as time progresses, the kernel profiles remain smooth and exhibit only minor variations, indicating the stability of the kernel-PDE solution and confirming the theoretical result of Lemma \ref{Bound}. Moreover, we can also observe how the $L^2$ approximation error remains uniformly small, below 0.1, as expected from the NO approximation Theorem \ref{th:NO}. 

\begin{figure}[h!]
	\centering
	\includegraphics[width=0.9\linewidth]{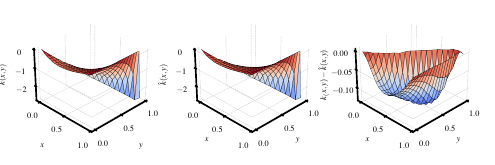}
	\caption{Backstepping kernels $k$ and $\hat k$ at $t = 5$s, together with the $L^2$ approximation error.}
	\label{fig:40-k-imex-m5}
\end{figure}

\begin{figure}[h!]
	\centering
	\includegraphics[width=0.9\linewidth]{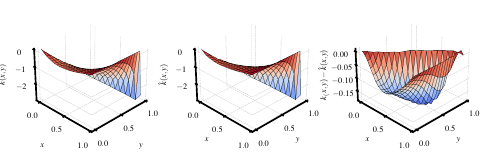}
	\caption{Backstepping kernels $k$ and $\hat k$ at $t = 7$s, together with the $L^2$ approximation error.}
	\label{fig:40-k-imex-m2}
\end{figure}

Finally, with the NO-approximated kernel $\hat k$ we have computed the control $\hat U$ and applied it to our PDE \eqref{eq:heat_1}. We can see in Fig.~\ref{closed_loop} how this time the dynamics is stabilized. 

\begin{figure}[h!]
	\centering
	\includegraphics[width=0.9\linewidth]{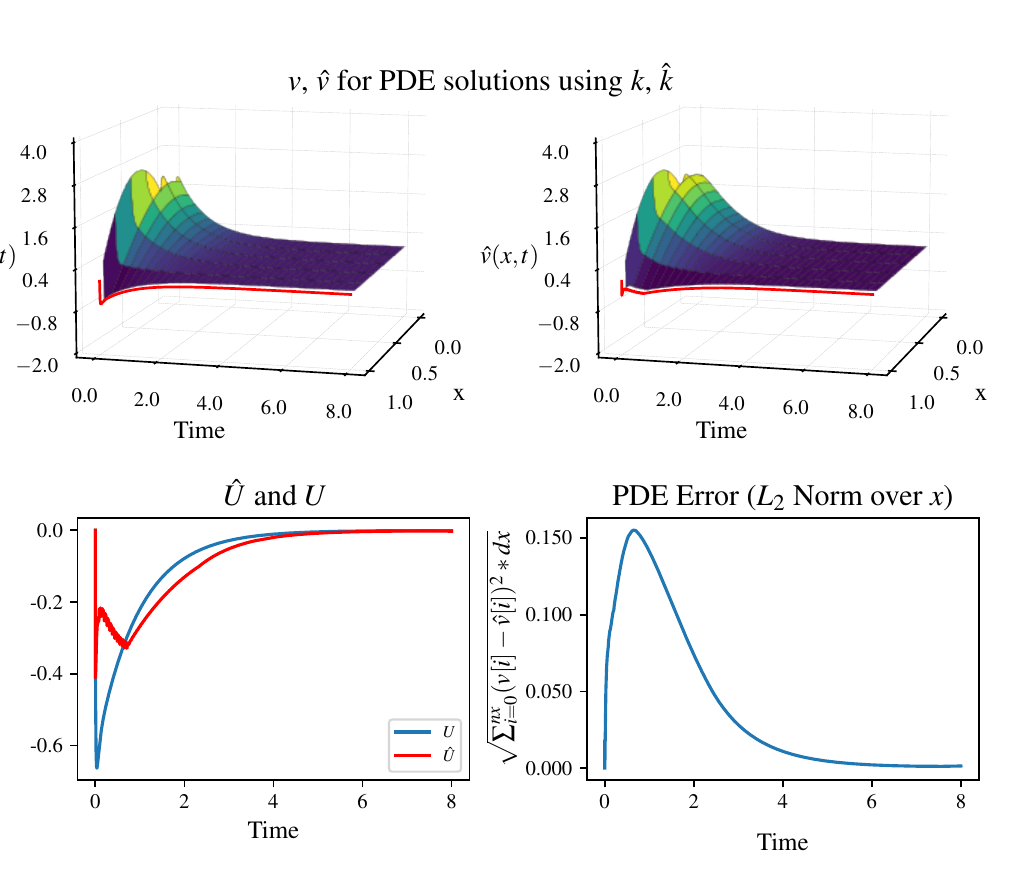}
	\caption{Closed-loop evolution of the true state $v$ and the state $\hat{v}$ obtained using the NO–approximated backstepping kernel $k$. The NO is trained to learn the mapping $\lambda(\cdot,\cdot)\mapsto k$, and the resulting approximate kernel is used to construct the backstepping feedback law.}
	\label{closed_loop}
\end{figure}

This experiment confirms that, in accordance with our main Theorem \ref{theorem1}, DeepONet indeed allows to efficiently compute approximated backstepping kernels that maintain the system's stability. This, together with the speedup evidences in Table \ref{tab:nopspeedups}, demonstrates the efficiency and scalability of the proposed approach for real-time control applications.

\subsection{Simulations results: practical stabilization with NO-approximated full feedback law $(\lambda,v) \mapsto U$}

We conclude by evaluating the practical stabilization performance of the NO–approximated feedback law \eqref{eq:U_NO_hat}. 

The NO is trained offline on a dataset generated from coefficients $\lambda$ given by \eqref{eq:lambda_1} with $\sigma \in {U}(2,4)$ and randomly sampled initial states $v$. Fig.~\ref{fig:semiglobal} shows the closed-loop responses obtained with the learned controller.  

\begin{figure}[h!]
	\centering
	\includegraphics[width=0.9\linewidth]{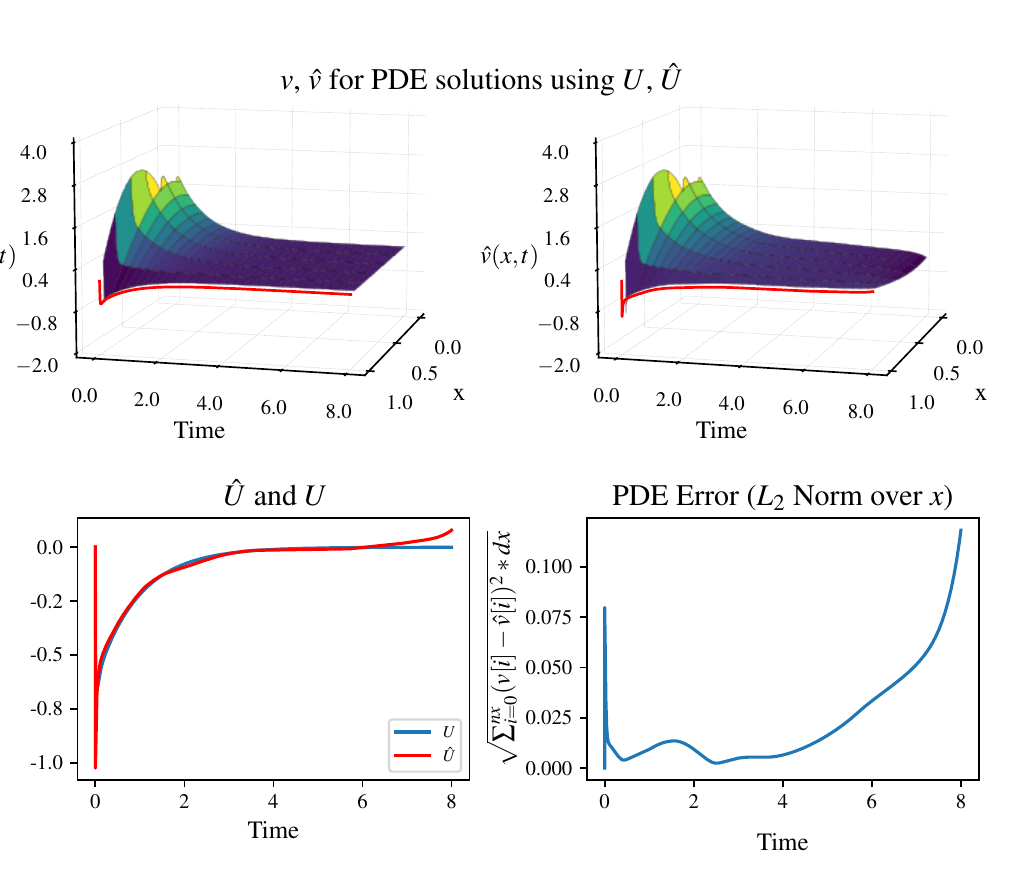}
	\caption{Closed-loop state responses under two control inputs: the ``perfect control'' $U$ computed from the exact backstepping kernel and the ``approximate control'' $\hat{U}$ generated directly by a NO trained to learn the mapping $(\lambda(\cdot,\cdot),\, v) \mapsto U$. }
\label{fig:semiglobal}
\end{figure}

As we can see, while we achieve a practical stabilization of the system, the learned controller generates an approximation error is present in the control signal. As a consequence, the corresponding PDE state exhibits a residual ripple for $T = 8$, whereas the analytically controlled system converges to the equilibrium by $T=8$, in accordance with the target system design.

This behavior is consistent with the theoretical analysis in Theorem~\ref{th:semiglobal}. Moreover, despite not achieving a perfect stabilization at time $T$, we can see the advantage of our proposed method in Table~\ref{tab:fullfeedbackspeedups}, reporting the computational cost of the analytical feedback evaluation and the directly learned feedback law $(\lambda,v)\mapsto U$ for different spatial discretizations. 

\begin{table}[h] 
	\centering
	\begin{tabular}{lccc}
		\hline
		\textbf{\begin{tabular}[c]{@{}l@{}}Step \\ Size \\(dx)\end{tabular}} 
		& 
		\textbf{\begin{tabular}[c]{@{}c@{}}Analytical \\ feedback \\ computational \\ time (s)\end{tabular}} 
		& 
		\textbf{\begin{tabular}[c]{@{}c@{}}Learned feedback \\ computational \\ time (s)\end{tabular}} 
		& 
		\textbf{Speedup } 
		\\ \hline
		$0.01$  & $3.8$   & $0.023$   & $165\times$ \\
		$0.005$ & $12.2$  & $0.025$   & $488\times$ \\
		$0.001$ & $189.8$   & $0.036$   & $5272\times$ \\ \hline
	\end{tabular}
	\caption{Computational speedups obtained by directly learning the full feedback law $(\lambda,v)\mapsto U$ compared to analytical feedback evaluation under different spatial discretizations.}
	\label{tab:fullfeedbackspeedups}
\end{table}

Indeed, the analytical controller requires the numerical evaluation of a spatial integral and computation of kernel at each time step, whose computational complexity increases significantly as the spatial mesh is refined.  In contrast, the learned controller only involves a forward pass through the neural network, whose computational cost remains nearly constant with respect to the spatial resolution. As a result, the speedup becomes more pronounced for finer discretizations, reaching three orders of magnitude when $dx=10^{-3}$. This confirms that directly learning the feedback law provides substantial computational advantages for real-time implementation. 

\section{Conclusions and open problems}\label{sec:conclusions}
This paper addressed the prescribed-time boundary stabilization of a heat equation with spatially and temporally varying reaction coefficient. The prescribed-time backstepping design leads to a genuinely time-evolving kernel governed by a two-dimensional parabolic PDE, whose repeated online solution renders classical implementations computationally prohibitive.

To overcome this bottleneck, we proposed a NO-based framework to approximate the mapping from the time-dependent coefficient to the corresponding backstepping kernel. We established that, under suitable approximation accuracy, the resulting NO-based controller preserves the prescribed-time stability guarantees of the nominal backstepping design. In addition, we investigated direct approximation of the full feedback mapping and proved semiglobal practical prescribed-time stability in this implementation-oriented setting. Numerical experiments confirmed that the proposed approach achieves several orders of magnitude reduction in computational cost, enabling real-time implementation without compromising stability guarantees.

The results of this work suggest several directions for further research. 
\begin{itemize}
    \item[1.]\textbf{Model generalization}: extending the analysis to nonlinear and multi-dimensional parabolic systems would clarify the scalability of neural-operator-based backstepping in more general settings.

    \item[2.]\textbf{Uncertainty and robustness}: robustness with respect to disturbances, measurement noise, and parametric uncertainty remains to be systematically investigated, particularly in the context of operator-approximation errors interacting with closed-loop dynamics.

    \item[3.]\textbf{Large-scale and high-resolution systems}: applying the approach to large-scale PDE systems, including fine spatial discretizations or multi-dimensional domains, where kernel computation dominates the computational cost, and evaluating the efficiency and scalability of the method.

    \item[4.]\textbf{Practical engineering applications}: Implementing the proposed method in real-world systems such as thermal management, chemical reactors, or traffic flow PDEs, to validate its real-time performance and practical feasibility.
\end{itemize}

Overall, this work contributes to bridging operator learning and rigorous infinite-dimensional control design, showing that NO can be incorporated into prescribed-time stabilization frameworks while maintaining explicit stability guarantees.



\end{document}